\documentclass[11pt,leqno]{article}
\usepackage{amsmath, amscd, amsthm, amssymb, graphics, xypic, mathrsfs, setspace, fancyhdr, times, bm, enumitem}
\usepackage[letterpaper,top=1.05in, bottom=1.05in, left=1.05in, right=1.05in]{geometry}
\usepackage[colorlinks=true,pagebackref=true]{hyperref} 
\hypersetup{backref}

\newcommand{\Spec}{\operatorname{Spec}}

\renewcommand{\hom}{\operatorname{Hom}}

\newcommand{\cplx}{{\mathbb C}}

\newcommand{\Z}{{\mathbb Z}}

\newcommand{\A}{{\mathbb A}}
\newcommand{\Proj}{{\mathbb P}}
\newcommand{\Jo}{{\mathbb J}}
\newcommand{\aone}{{\mathbb A}^1}
\newcommand{\pone}{{\mathbb P}^1}

\newcommand{\gm}{{{\mathbb G}_{m}}}

\newcommand{\MW}{\mathrm{MW}}

\newcommand{\bpi}{\bm{\pi}}

\newcommand{\piaone}{{\bpi}^{\aone}}

\newcommand{\KMW}[1]{{{\mathbf K}_{#1}^{\MW}}}
\newcommand{\KM}[1]{{{\mathbf K}_{#1}^{\mathrm{M}}}}

\newcommand{\Addresses}{{
 \bigskip
 \footnotesize

 J.~Fasel, Institut Fourier, UMR 5582, Université Grenoble Alpes, CS 40700, 38058 Grenoble cedex 9, France \textit{E-mail address:} \url{jean.fasel@univ-grenoble-alpes.fr}

 \medskip

 W.P.~Hornslien, Institut Fourier, UMR 5582, Université Grenoble Alpes, CS 40700, 38058 Grenoble cedex 9, France \textit{E-mail address:} \url{w.hornslien.math@proton.me}
}}

\newcounter{intro}
\theoremstyle{plain}
\newtheorem{thm}{Theorem}[subsection]

\newtheorem{lem}[thm]{Lemma}

\newtheorem{prop}[thm]{Proposition}
\newtheorem*{claim*}{Claim}  

\newtheorem*{thm*}{Theorem}
\newtheorem*{problem*}{Problem}

\newtheorem{thmintro}{Theorem}

\theoremstyle{definition}

\theoremstyle{remark}
\newtheorem{rem}[thm]{Remark}

\numberwithin{equation}{section}

\begin{document}
\pagestyle{fancy}
\renewcommand{\sectionmark}[1]{\markright{\thesection\ #1}}
\fancyhead{}
\fancyhead[LO,R]{\bfseries\footnotesize\thepage}
\fancyhead[LE]{\bfseries\footnotesize\rightmark}
\fancyhead[RO]{\bfseries\footnotesize\rightmark}
\chead[]{}
\cfoot[]{}
\setlength{\headheight}{1cm}

\author{ Jean Fasel\thanks{Jean Fasel was partially supported by the project IRGA2024 –MOTOR – G7H-IRG24E97.}\and William Hornslien\thanks{William Hornslien was partially supported by the project IRGA2024 –MOTOR – G7H-IRG24E97.} }

\title{{\bf Exotic Hopf maps, weight shifting and applications to vector bundles}}
\date{}
\maketitle

\begin{abstract}
Using motivic homotopy theory we produce several explicit polynomial representatives of the suspension of the Hopf map defined over $\Z$. We derive from this computation an explicit rank $2$ vector bundle on the Jouanolou device of $\mathbb{P}^3_\Z$.
\end{abstract}

\begin{footnotesize}
\setcounter{tocdepth}{1}
\tableofcontents
\end{footnotesize}

\section{Introduction}

This paper can be viewed as an explicit version of some results in \cite{Asok20b}, where the problem of producing complex polynomial representatives of homotopy groups of spheres was considered. Recall that P. Baum proved in \cite{Baum67} that any element of the \emph{stable} homotopy groups of spheres can be represented by an admissible quadratic map, i.e., a real polynomial map of degree at most $2$. However, Wood later observed \cite{Wood68} that there are no non-constant polynomial maps $S^n\to S^m$ for $n\geq 2m$, showing that Baum's result fails in the unstable range. This led to the study of complex polynomial representatives, using the fact that the complex algebraic sphere $S_{\mathbb{C}}^n$, viewed as a manifold, is homotopy equivalent to the real sphere $S^n$. At present, no element of a homotopy group of spheres is known that cannot be represented by such a complex polynomial map.

In \cite[Proposition 2.2.1]{Asok20b}, it was shown that the suspension of the Hopf map $S^4\to S^3$ and the square of the Hopf map $S^5\to S^3$ admit complex polynomial representatives. The method relied on motivic homotopy theory and a weight-shifting procedure that we briefly recall. If $k$ is a field and $n\geq 1$, consider the smooth quadrics $Q_{2n-1}:=\{\sum x_iy_i=1\}\subset \A^{2n}_k$ (with coordinates $x_1,\ldots,x_n,y_1,\ldots,y_n$) and $Q_{2n}:=\{\sum x_iy_i=z(1-z)\}\subset \A^{2n+1}_k$ (with coordinates $x_1,\ldots,x_n,y_1,\ldots,y_n,z$). It is well known (e.g. \cite{Asok14}) that these quadrics model motivic spheres in the (unstable) motivic homotopy category $\mathcal{H}(k)$ of Morel–Voevodsky \cite{Morel99}. Moreover, for any smooth affine $k$-scheme $X$, the map $\mathrm{Hom}_{\mathrm{Sch}_k}(X,Q_i)\to [X,Q_i]_{\A^1}:=\mathrm{Hom}_{\mathcal{H}(k)}(X,Q_i)$ is surjective for $i\geq 1$ (\cite[Theorem 4.2.1]{Asok15b}, \cite[Corollary 3.1.1]{Asok22}).

If $k\subset \cplx$, the complex realization functor \cite[\S 3.3]{Morel99}
\[
r_{\cplx}\colon \mathcal{H}(\cplx)\to \mathcal{H}
\]
(left Kan extended from $X\mapsto X(\cplx)$ endowed with the complex topology) sends $Q_i$ to $S^i$. Hence, we obtain a composite map
\[
\mathrm{Hom}_{\mathrm{Sch}_k}(Q_j,Q_i)\to [Q_j,Q_i]_{\A^1}\to \pi_j(S^i)
\]
for any $i,j\geq 1$. Thus, to show that $\alpha\in \pi_j(S^i)$ admits a polynomial representative, it suffices to produce a lift $\alpha'\in [Q_j,Q_i]_{\A^1}$. Since $Q_j$ is a motivic sphere, we have identifications with bi-graded homotopy groups
\[
[Q_j,Q_i]_{\A^1}=\begin{cases} \piaone_{n,n}(Q_i)(k) & \text{ if $j=2n$, }  \\ \piaone_{n-1,n}(Q_i)(k) & \text{ if $j=2n-1$. }\end{cases}
\]
These groups can sometimes be computed using fiber sequences analogous to those in topology. For example, setting $i=3,j=4$ yields
\[
\piaone_{2,2}(Q_3)=\piaone_{2,2}(\mathrm{SL}_2)=\piaone_{2,2}(\A^2\smallsetminus\{0\}),
\]
which fits in an exact sequence (\cite[Theorem 3.3, Proposition 4.4]{Asok12a})
\[
\KM 2(k)/12\times_{\KM 2(k)/2}\mathbf{I}^2(k)\to \piaone_{2,2}(Q_3)(k)\to \mathbf{GW}_1^0(k)\to 0.
\]
If $k$ has characteristic $\neq 2$, then $\mathbf{GW}_1^0(k)=\mathrm{KO}_1(k)\simeq \Z/2\times k^\times/(k^{\times})^2$. When $k=\cplx$, we obtain in particular $\piaone_{2,2}(Q_3)(\cplx)\simeq \Z/2$, and the generator maps to the generator of $\pi_4(S^3)=\Z/2$, namely the suspension of the Hopf map.

In \cite{Asok20b}, a different approach called \emph{weight shifting} was used to produce a motivic lift of the suspension of the Hopf map. We observe that $\piaone_{1,3}(Q_3)=\mathbf{W}$ (the Witt sheaf) and that its generator, as a $\KMW 0$-module, is $2_{\epsilon}$-torsion (with $2_{\epsilon}=\langle 1,-1\rangle\in \mathrm{GW}(k)$). It is shown in \cite[Theorem 2.1.11]{Asok20b} that choosing $-1$ as a primitive $2$nd root of unity induces a morphism of sheaves
\[
\mathbf{W}={}_{2_\epsilon}\piaone_{1,3}(Q_3)\to \piaone_{2,2}(Q_3)/[-1]\piaone_{2,3}(Q_3).
\]
By construction, the image of the generator realizes to the suspension of the Hopf map. Although conceptually natural, this construction is difficult to make explicit and a geometric realization was missing.

In this article we give two explicit constructions of morphisms of quadrics $Q_4\to Q_3$, called \emph{exotic Hopf maps}, whose complex realization is the suspension of the Hopf map. The first construction uses the computation of $\piaone_{2,2}(Q_3)$ above, specifically the morphism $\piaone_{2,2}(Q_3)\to \mathbf{GW}_1^0(k)$ and the summand $\Z/2\subset \mathrm{GW}_1^0(k)$. We first note that $\mathrm{GW}_1^0(k)=\piaone_{2,2}(\mathrm{Sp})$, where $\mathrm{Sp}$ denotes the infinite symplectic group defined over $\Z$. Hence
\[
\piaone_{2,2}(\mathrm{Sp})=[Q_4,\mathrm{Sp}]_{\aone}=\mathrm{GW}_1^2(Q_4)=\mathrm{KSp}_1(Q_4).
\]
Using the edge map in the coniveau spectral sequence for Hermitian $K$-theory, we construct an explicit symplectic matrix $M\in \mathrm{Sp}_8(Q_4)$ representing the generator of $\Z/2$. Connectivity arguments show that this matrix can be reduced by elementary symplectic operations, although carrying this out explicitly is somewhat delicate. Nevertheless, we obtain the following theorem.

\begin{thmintro}
The exotic Hopf map $Q_4\to Q_3$ defined (over $\Z$) by the matrix $\begin{pmatrix}
a & b \\ c & d
\end{pmatrix}$ with

\begin{align*}
a =& 16x_2^3y_1^2y_2+16x_2^2y_1^2z-16x_2^2y_1y_2z-16x_2^2y_1^2-8x_2^2y_1y_2-16x_2^2y_2^2-24x_2y_1z^2 \\ &+20x_2y_1z-8x_2y_2z+8z^3+4x_2y_1-8x_1y_2+12x_2y_2-8z^2-2z+1, \\
b =& -32x_2^3y_2z-32x_1x_2^2y_2+32x_2^3y_2-32x_2^2z^2-48x_1x_2z+64x_2^2z-16x_1^2+40x_1x_2-32x_2^2, \\
c =& -2x_2y_1^3-4x_2y_1^2y_2+2y_1^2z+4y_1y_2z+y_1^2+2y_1y_2+4y_2^2, \\
d =& 4x_2y_1z+8x_2y_2z-4x_2y_1+8x_1y_2-12x_2y_2-4z^2+2z+1.
\end{align*}
complex realizes to the suspension of the Hopf map.
\end{thmintro}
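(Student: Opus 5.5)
The plan is to verify the statement in two stages. First, check that the displayed matrix defines a morphism $Q_4\to Q_3\cong \mathrm{SL}_2$. Second, identify its class in $\piaone_{2,2}(Q_3)(\cplx)\simeq\Z/2$ through the surjection $\piaone_{2,2}(Q_3)\to \mathbf{GW}_1^0=\piaone_{2,2}(\mathrm{Sp})$ recalled in the introduction.

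\textbf{Step 1: the matrix lies in $\mathrm{SL}_2$.} We need $ad-bc=1$ in $\Z[x_1,x_2,y_1,y_2,z]/(x_1y_1+x_2y_2-z(1-z))$. This is a finite polynomial identity. I would reduce $ad-bc-1$ modulo the relation, say by a Gr\"obner basis or by substituting for $x_1y_1$, and check that the result is zero. Since everything is defined over $\Z$, the map makes sense over any base, and in particular over $\cplx$.

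\textbf{Step 2: reduce to a stable invariant.} Over $\cplx$ we have $\mathrm{KO}_1(\cplx)=\Z/2$ (the square-class factor vanishes). Hence the map $\piaone_{2,2}(Q_3)(\cplx)\to \mathbf{GW}^0_1(\cplx)$ is an isomorphism $\Z/2\to\Z/2$: it is surjective by the exact sequence, and the kernel term vanishes because $\KM2(\cplx)/12$ and $\mathbf I^2(\cplx)$ are zero. Its generator realizes to $\Sigma\eta$. It therefore suffices to show that the composite $Q_4\to\mathrm{SL}_2=\mathrm{Sp}_2\to\mathrm{Sp}$ is nontrivial in $\mathrm{KSp}_1(Q_4)$, that is, in the reduced group $\mathrm{GW}^2_1$ of $Q_4$ viewed as a motivic sphere. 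By construction, the paper's symplectic matrix $M\in\mathrm{Sp}_8(Q_4)$, obtained from the edge map of the coniveau spectral sequence, represents the generator. So the core claim is that $\mathrm{diag}(\begin{pmatrix}a&b\\c&d\end{pmatrix},I_6)$ and $M$ differ by elementary symplectic matrices over $\Z[Q_4]$. Elementary symplectic matrices are $\aone$-homotopic to the identity, so this suffices.

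\textbf{Step 3: exhibit the elementary reduction.} I would record the sequence of elementary symplectic row and column operations, and possibly conjugations, that reduce $M$ to block form $\mathrm{diag}(A,I_6)$ with $A$ the displayed matrix. Verifying this is again a finite matrix identity. This step is the main obstacle. Connectivity ($Q_4$ has dimension $4$, and stability results for $\mathrm{Sp}_{2n}$) guarantees that such a reduction exists, but the operations themselves must be found by hand or computer, working block by block through the $8\times 8$ matrix. The explicit entries of $a,b,c,d$ presumably record exactly the outcome of this reduction.

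As a sanity check independent of Step 3, one can compute directly after complex realization. Restrict to the real points $x_i=\bar y_i$, $z$ real, which give a deformation retract $S^4\subset Q_4(\cplx)$. Compose with the projection $\mathrm{SL}_2(\cplx)\to \mathrm{SU}(2)\simeq S^3$. Then compute the resulting element of $\pi_4(S^3)$, for instance by checking that the induced clutching bundle on $S^5$ is nontrivial, or by detecting a nonzero $\mathrm{Sq}^2$ in the mapping cone.
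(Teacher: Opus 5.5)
Your plan is essentially the paper's proof. Over $\cplx$ you identify the class of the map in the reduced $\mathrm{KSp}_1(Q_4)\cong\Z/2$ with the image of the permutation-matrix generator of $\mathbf{GW}_1^0$, represented by an explicit matrix in $\mathrm{Sp}_8(\Z[Q_4])$, and then reduce that matrix by elementary symplectic operations (plus a change of symplectic basis) down to $\mathrm{Sp}_2$.

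The two ingredients you treat as black boxes are exactly what the paper supplies. First, the $\mathrm{Sp}_8$ matrix (called $U$ in Section 3.1) is the automorphism of $(P\oplus Q)\oplus(P\oplus Q)=\Z[Q_4]^8$ that swaps the two copies of the universal rank $2$ symplectic bundle $P=\mathrm{Im}(M)$; this lifts the swap on $((k[Q_4]/\mathfrak p)^2,\varphi^{\oplus 2})$. Second, the reduction is carried out explicitly: conjugation by $S$ takes the form to $J_4$, and the matrices $E_1,\dots,E_8$ together with symmetric blocks strip off one hyperbolic plane at a time, $\mathrm{Sp}_8\to\mathrm{Sp}_6\to\mathrm{Sp}_4\to\mathrm{Sp}_2$.
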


The second construction uses the weight-shifting procedure. Starting from the universal rank $2$ bundle on $Q_5\simeq \A^3\smallsetminus{0}$ classified by a map $Q_5\to \mathrm{BSL}_2$, adjunction yields a map $S^{1,3}\to \mathrm{SL}_2=Q_3$. Since $S^{1,3}$ has no convenient model as a smooth scheme, we instead use a presentation as a quotient of $Q_3\times \gm{}$ (as Nisnevich sheaves of sets). This leads to a morphism of schemes
\[
T\colon Q_3\times \gm{}\to \mathrm{SL}_2=Q_3
\]
which is trivial on $Q_3\vee \gm{}$. Considering a model of the multiplication-by-$2$ map $m'\colon Q_3\to Q_3$, one checks that the composite
\[
Q_3\times \gm{}\xrightarrow{m'\times 1}Q_3\times \gm{}\to Q_3
\]
is homotopically trivial. This produces a morphism
\[
\phi\colon Q_3\times \gm{}\times \A^1\to Q_3
\]
with $\phi(0)=\mathrm{Id}$ and $\phi(1)=T\circ m'$. Evaluating at the point $-1\in\gm{}$ gives a morphism $G\colon Q_3\times\A^1\to Q_3$ satisfying $G(0)=G(1)=\mathrm{Id}$ and then a map $(Q_3\times \A^1)/(Q_3\times S^0)\to Q_3$. The source formally resembles a suspension of $Q_3$, hence $Q_4$. After suitable modifications and using a morphism $Q_3\times(\A^1\smallsetminus\{0,1\})\to Q_4$ constructed in Section \ref{sec:basicsquadrics}, we obtain the desired morphism $Q_4\to Q_3$. The resulting map has relatively high degree, as illustrated by the following theorem.

\begin{thmintro}
The exotic Hopf map $Q_4\to Q_3$ defined (over $\Z$) by the unimodular row $(a',b')$ with
\begin{align*}
a' =& 144x_2^2z^9+144x_2y_1z^9+72x_1x_2z^8-384x_2^2z^8-240x_2y_1z^8-72x_2 y_2z^8-72z^{10}-192x_1x_2z^7\\
&+112x_2^2z^7-128x_2y_1z^7+120x_2y_2z^7+192 z^9+92x_1x_2z^6+384x_2^2z^6+256x_2y_1z^6+28x_2y_2z^6\\
&-92z^8+96x_1x_2z ^5-208x_2^2z^5+48x_2y_1z^5-24y_1^2z^5-68x_2y_2z^5-96z^7-40x_1x_2z^4\\
&- 64x_2^2z^4-16x_2y_1z^4-16y_1^2z^4-28x_2y_2z^4+12y_1y_2z^4+28z^6-32 x_1x_2z^3\\
&-48x_2^2z^3-64x_2y_1z^3+16y_1^2z^3+4x_2y_2z^3+8y_1y_2z^3+48 z^5-12x_1x_2z^2\\
&+64x_2^2z^2+16y_1^2z^2+16x_2y_2z^2-2y_1y_2z^2+18z^4+16x_1x_2z+8y_1^2z\\
&-4y_1y_2z-24z^3-2y_1y_2-2z^2+1, \\
b' =& -144x_1x_2z^9+144x_2y_2z^9-72x_1^2z^8+384x_1x_2z^8+72x_1y_2z^8-240x_2 y_2z^8+192x_1^2z^7\\
&- 112x_1x_2z^7-120x_1y_2z^7-128x_2y_2z^7-92x_1^2z^6 -384x_1x_2z^6-28x_1y_2z^6\\
&+256x_2y_2z^6-96x_1^2z^5+208x_1x_2z^5+68x_1 y_2z^5+48x_2y_2z^5-24y_1y_2z^5\\
&-24z^7+40x_1^2z^4+64x_1x_2z^4+28x_1y_2 z^4-16x_2y_2z^4-16y_1y_2z^4\\
&+12y_2^2z^4+32z^6+32x_1^2z^3+48x_1x_2z^3- 4x_1y_2z^3-64x_2y_2z^3\\
&+16y_1y_2z^3+8y_2^2z^3+24z^5+12x_1^2z^2-64x_1 x_2z^2-16x_1y_2z^2\\
&+16y_1y_2z^2-2y_2^2z^2-32z^4-16x_1^2z+8y_1y_2z-4 y_2^2z-4z^3-2y_2^2+4z.
\end{align*}
complex realizes to the suspension of the Hopf map.
\end{thmintro}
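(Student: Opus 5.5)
Since the row $(a',b')$ is obtained by making the weight-shifting construction explicit, the plan is to check that each step of that construction is a genuine morphism of schemes realizing the intended $\A^1$-homotopy class, and then identify the complex realization. First I will check directly that $(a',b')$ is unimodular on $Q_4$ over $\Z$. That is, I will exhibit $c',d'\in\Z[Q_4]$ with $a'd'-b'c'=1$ modulo $\sum x_iy_i-z(1-z)$. The completion should come out of the construction, because the map is obtained as the first row of a composite of matrices in $\mathrm{SL}_2$. This gives a morphism $Q_4\to Q_3=\mathrm{SL}_2$. Its realization is determined up to homotopy by its class in $[Q_4,Q_3]_{\A^1}=\piaone_{2,2}(Q_3)(\Z)$. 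After base change to $\cplx$ this class lies in $\piaone_{2,2}(Q_3)(\cplx)\simeq\Z/2$. It therefore suffices to show the class is nontrivial over $\cplx$, and then to invoke the realization statement that the generator maps to the suspension of the Hopf map.

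To identify the class I will follow the weight-shifting recipe described in the introduction.
\begin{enumerate}
\item Write down $T\colon Q_3\times\gm{}\to\mathrm{SL}_2$, adjoint to the clutching of the universal rank $2$ bundle on $Q_5\simeq\A^3\smallsetminus\{0\}$, and check that it is trivial on $Q_3\vee\gm{}$.
\item Take an explicit degree-$2$ endomorphism $m'$ of $Q_3$, for instance $(x,y)\mapsto$ the square in $\mathrm{SL}_2$ of the matrix attached to $(x,y)$.
\item Produce an explicit naive $\A^1$-homotopy $\phi$ from $\mathrm{Id}$ to $T\circ m'$. Here I use that $2_\epsilon$ kills the generator of $\piaone_{1,3}(Q_3)=\mathbf W$, together with explicit elementary-matrix homotopies, since $\mathrm{SL}_2$ over affine schemes admits Suslin-type $\A^1$-paths through $E_2$.
\item Specialize at $-1\in\gm{}$ to obtain $G$ with $G(0)=G(1)=\mathrm{Id}$.
\item Precompose with the morphism $Q_3\times(\A^1\smallsetminus\{0,1\})\to Q_4$ of Section~\ref{sec:basicsquadrics}, which exhibits $Q_4$ as a suspension of $Q_3$. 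The boundary condition on $G$ makes the resulting map extend, or descend, to $Q_4$.
\end{enumerate}
Expanding the formulas should reproduce $(a',b')$, which I would verify with a computer algebra system.

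For nontriviality I will use \cite[Theorem 2.1.11]{Asok20b}. The composite built above is a representative of the image of the generator of $\mathbf W={}_{2_\epsilon}\piaone_{1,3}(Q_3)$ under the weight-shifting map into $\piaone_{2,2}(Q_3)/[-1]\piaone_{2,3}(Q_3)$, computed with the root of unity $-1$. That theorem says this image realizes to the suspension of the Hopf map. Over $\cplx$, $[-1]=0$ in $\KMW{1}(\cplx)$ since $-1$ is a square, so the quotient is harmless.

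The main obstacle is justifying that the explicit choices agree with the abstract construction. The specific homotopy $\phi$ and the identification of the quotient $(Q_3\times\A^1)/(Q_3\times S^0)$ with $Q_4$ through the morphism of Section~\ref{sec:basicsquadrics} must both be compatible with the suspension isomorphism used in \cite{Asok20b}. A different null-homotopy changes the answer by an element coming from $\piaone_{2,3}(Q_3)$ suspended, i.e. by $[-1]$-multiples, which again vanish over $\cplx$. I will argue exactly this indeterminacy to make the result insensitive to the choice.

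As an independent check, I would compare with the first exotic Hopf map by computing the image in $\mathrm{GW}_1^0(\cplx)=\Z/2$ via the symplectic matrix and edge-map method. An alternative is to compute the degree of the induced map on $H^4$ of the mapping cone, or the Steenrod square $Sq^2$, after complex realization.
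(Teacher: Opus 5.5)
Your overall route is the paper's: the proof of this theorem is the explicit weight-shifting construction of the paper's second construction combined with \cite[Theorem 2.1.11]{Asok20b}. However, your step 2 uses the wrong degree-two map, and with it steps 3 and 4 fail.

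Weight shifting requires precomposing $T$ with $\mathrm{Id}_{Q_3}\wedge(t\mapsto t^2)$ on the $\gm$-factor, for two reasons. This map acts on $\piaone_{1,3}(\mathrm{SL}_2)$ by $2_\epsilon=\langle 1,-1\rangle$, which kills $T$. It also sends the chosen root of unity $-1$ to the base point $1$, and that is exactly what makes the specialization at $x_3=-1$ a loop.

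Group-squaring on $Q_3=\mathrm{SL}_2$ does neither. It represents $2=\langle 1,1\rangle\in\mathrm{GW}(k)=[Q_3,Q_3]_{\A^1}$, since by Eckmann--Hilton the group law of $\mathrm{SL}_2$ agrees with the co-H sum. Hence $T\circ(\mathrm{sq}\times 1)$ is $2T$ in $\piaone_{1,3}(\mathrm{SL}_2)(k)\cong W(k)$, where $T$ is a generator. This is nonzero whenever $-1$ is not a square, e.g.\ over $\mathbb{R}$ where $W(\mathbb{R})\cong\Z$. So no null-homotopy as in step 3 exists over $\Z$, since it would base change to one over $\mathbb{R}$. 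Even over $\cplx$, where a null-homotopy does exist, step 4 breaks. With $M=\bigl(\begin{smallmatrix}x_1&-y_2\\x_2&y_1\end{smallmatrix}\bigr)$, specializing at $x_3=-1$ gives $\mathrm{diag}(1,-1)M^2\mathrm{diag}(1,-1)M^{-2}\neq\mathrm{Id}$, so $G(1)\neq\mathrm{Id}$.

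The paper instead uses the endomorphism $m'$ of $Q_5$ given by $x_3\mapsto x_3^2$, $y_3\mapsto y_3^2$, $y_i\mapsto y_i(1+x_3y_3)$ for $i=1,2$. It preserves $U$ and $V$ and restricts to $\mathrm{Id}\times(t\mapsto t^2)$ on $U\cap V=Q_3\times\gm$. Then $T\circ m'=\mathrm{diag}(x_3,x_3^{-1})\,M\,\mathrm{diag}(x_3^{-1},x_3)\,M^{-1}$. A Whitehead path $A(t)$ from $\mathrm{Id}$ to $\mathrm{diag}(x_3,x_3^{-1})$ then gives the null-homotopy $A(t)MA(t)^{-1}M^{-1}$ directly.

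Independently of this, step 5 is not automatic: $G(0)=G(1)=\mathrm{Id}$ does not make the map extend to $Q_4$. Transporting along $\alpha_3^{-1}$ replaces $y_i$ by $y_i/(z(1-z))$. So in the decomposition of $MBM^{-1}$ given by the lemma of Section~\ref{sec:weightshifting}, the coefficient $q$ of the term quadratic in the $y_i$ must be divisible by $t^2(1-t)^2$. For $B=A(t)^{-1}$ one has $q=2(t-t^3)$, so $G$ does not extend. The paper replaces $A(t)^{-1}$ by $B(t)=\bigl(\bigl(\begin{smallmatrix}1&6t^2-8t\\0&1\end{smallmatrix}\bigr)\bigl(\begin{smallmatrix}1&0\\t^2&1\end{smallmatrix}\bigr)\bigr)^2$. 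This is homotopic to $A(t)^{-1}$ relative to $t(1-t)$ (both equal $\mathrm{Id}$ at $t=0$ and $-\mathrm{Id}$ at $t=1$) and has the required divisibility. The row $(a',b')$ is $\psi^{-1}$ of the first row of $A(t)MB(t)M^{-1}$. Without this modification, and with your $m'$, the computer check you propose would not reproduce $(a',b')$.

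A smaller point: $[-1]\neq 0$ in $\KMW{1}(\cplx)\cong\KM{1}(\cplx)=\cplx^\times$. The indeterminacy $[-1]\piaone_{2,3}(Q_3)$ still disappears, but for a different reason. Either note that $[-1]=2[i]$ while $\piaone_{2,2}(Q_3)(\cplx)\cong\Z/2$, or that $\{1,-1\}\to\cplx^\times$ is null after complex realization because $\cplx^\times$ is connected.
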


At present we do not know whether the maps in Theorems 1 and 2 are homotopic over $\Z$. Both reduce to the identity over $\mathbb{F}_2$, since the generator of $\mathrm{GW}_1^0(k)$ in the first construction and $-1$ in the second become trivial modulo $2$. Finally, we note that Turiel constructs in \cite{Turiel07} a suspension procedure for certain maps between the algebraic spheres $\mathbb{S}_k^n=\Spec(k[x_1,\ldots,x_{n+1}]/(\Sigma x_i^2-1))$, including the Hopf map. Since $k[\mathbb{S}^{n}_\cplx]\cong k[Q_n]$, this method also gives a map $Q_4\to Q_3$, defined over $\Z[i]$, whose complex realization is the suspension of the Hopf map. The resulting unimodular row $(a^{\prime\prime},b^{\prime\prime})$ is

\begin{align*}
a^{\prime\prime} =& -2x_1^2z^2i-4x_1x_2z^2i-2x_2^2z^2i+2y_1^2z^2i-4y_1y_2z^2i+2y_2^2z^2i-4x_2y_1z^2+4x_1y_2z^2\\
&+8x_2y_2z^2+4z^4+2x_1^2zi+4x_1x_2zi+2x_2^2zi-2y_1^2zi+4y_1y_2zi-2y_2^2zi+4x_2y_1z\\
&-4x_1y_2z-8x_2y_2z-8z^3+x_1^2i+4x_1x_2i+x_2^2i-y_1^2i+4y_1y_2i-y_2^2i\\
&+2x_2y_1-2x_1y_2-8x_2y_2+4z,\\
b^{\prime\prime} =& 2x_1^2z^2i+4x_1x_2z^2i+2x_2^2z^2i+2y_1^2z^2i-4y_1y_2z^2i+2y_2^2z^2i-2x_1^2zi-4x_1x_2zi\\
&-2x_2^2zi-2y_1^2zi+4y_1y_2zi-2y_2^2zi+4z^3i-x_1^2i-4x_1x_2i-x_2^2i\\
&-y_1^2i+4y_1y_2i-y_2^2i-6z^2i+i.
\end{align*}
In Section \ref{sec:pone-suspension}, we develop a method for $\pone$-suspending maps between quadrics. Given a map $f \in \hom_{\mathrm{Sch}_k}(Q_i, Q_j)$, we produce a new map in $\hom_{\mathrm{Sch}_k}(Q_{i+2}, Q_{j+2})$ which has the homotopy class of $\Sigma_{\pone}f$. We use this to give a procedure for constructing a map representing any homotopy class in $[Q_i,Q_i]_{\A^1} \cong \mathrm{GW}(k)$ for $i \geq3$.

In the second part of the paper, we apply our construction of the exotic Hopf map to obtain an explicit rank $2$ vector bundle on a convenient replacement of $\mathbb{P}^3$. For context, recall that Atiyah and Rees observed in \cite{Atiyah76} that the bundle coming from the composite 
\[
\cplx\Proj^3 \to S^6 \xrightarrow{\Sigma^3 \eta }  S^5 \to BU(2)
\]
had trivial Chern classes, but was indecomposable. They then showed that all topological bundles on $\cplx \Proj^2$ are uniquely determined by their Chern classes and a $\Z_2$-invariant they called the $\alpha$-invariant. Horrocks \cite{Horrocks68} came up with a way of constructing rank $2$ bundles on $\cplx\Proj^3$ and Atiyah and Rees proved that all complex rank $2$ topological vector bundles on $\cplx\Proj^3$ can be obtained by starting with a sum of line bundles and then iteratively tensoring with a line bundle and applying Horrocks' construction \cite[Theorem 1.1]{Atiyah76}. In \cite{Asok20b}, it was shown that the same story holds in (unstable) motivic homotopy theory. Namely, there is a composite
\[
\Proj^3_k \to (\pone_k)^{\wedge 3} \xrightarrow{\Sigma_{\Proj^1}\eta' }  \A^3_k\smallsetminus\{0\} \to \mathrm{BSL}_2
\]
where the first map is the ``clutching'' map, the second one is the class of any of the exotic Hopf maps constructed above and the third one is the map classifying the universal rank $2$ bundle on $ \A^3_k\smallsetminus\{0\}$. However, by construction this composite yields a class in $[\Proj^3_k,\mathrm{BSL}_2]_{\A^1}$, i.e., a motivic vector bundle on $\mathbb{P}^3_k$. Such a bundle can be seen as an actual algebraic vector bundle on a convenient affine replacement of $\mathbb{P}^3_k$, the so-called \emph{Jouanolou device} of $\mathbb{P}^3_k$ denoted by $\mathbb{J}^3_k$ below. Up to homotopy, the above composite then reads as
\[
\mathbb{J}^3_k \to Q_6 \xrightarrow{\Sigma_{\Proj^1}\eta' }  Q_5 \to \mathrm{BSL}_2
\]
where the first map is an explicit model of the clutching map provided in Section \ref{sec:collapsemap} below, and the results in the first part of the paper allow us to obtain an explicit idempotent $3\times 3$-matrix of rank $2$ on $\mathbb{J}^3_k$ that is the algebraic construction of the topological composite above.  If $k=\cplx$, we prove that it has the expected algebraic invariants: trivial Chern classes and non-trivial Atiyah-Rees invariant. In view of Horrock's construction, the vector bundle on $\mathbb{J}^3_k$ is pulled-back from $\Proj^3_k$ in case $k=\cplx$, but we don't know if this is the case over an arbitrary field.

\subsection*{Conventions}
We work over a perfect field $k$. We use the conventions of \cite{Asok20b} for motivic homotopy theory, and the notation of \cite[\S 3]{Asok12c} for higher Grothendieck-Witt sheaves (a.k.a. Hermitian $K$-theory sheaves). In particular, we will use symplectic $K$-theory and denote the usual skew-symmetric matrix on $\Z^2$ by $H$, i.e.,
\[
H := \begin{pmatrix}
    0 & 1 \\ -1 & 0
\end{pmatrix}.
\]

\section{Maps between quadrics}

\subsection{Basics on quadrics}\label{sec:basicsquadrics}

We start by exploring a bit the geometry of $Q_{2n}$ for $n\geq 1$. We consider the closed subschemes 
\[
Z_0:=\{x_1=\ldots=x_n=z=0\}\subset Q_{2n}, \phantom{i}Z_1:=\{x_1=\ldots=x_n=1-z=0\}\subset Q_{2n}
\]
and set $X_i:=Q_{2n}\smallsetminus Z_i$ for $i=0,1$. It is straightforward to check that $Z_i\simeq \A^n$ (with coordinates $y_1,\ldots,y_n$) and we know from \cite[Theorem 3.1.1]{Asok14} that the subschemes $X_i$ are $\A^1$-contractible. On the other hand
\[
X_0\cap X_1=Q_{2n}\smallsetminus (Z_0\sqcup Z_1)=Q_{2n}\smallsetminus \{x_1=\ldots=x_n=0\}
\]
and there is an obvious projection morphism 
\[
p_n\colon X_0\cap X_1\to \A^n\smallsetminus \{0\},\phantom{i}(x_1,\ldots,x_n,y_1,\ldots,y_n,z)\mapsto (x_1,\ldots,x_n).
\]
\begin{lem}
The map $p_n\colon  X_0\cap X_1\to \A^n\smallsetminus \{0\}$ is a weak equivalence. 
\end{lem}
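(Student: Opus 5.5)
Our plan is to show that $p_n$ is Zariski-locally trivial with affine-space fibers, i.e. an affine space bundle (torsor under a vector bundle). Such maps are $\A^1$-weak equivalences. Over the open set $U_i=\{x_i\neq 0\}\subset \A^n\smallsetminus\{0\}$, we examine the fiber of $p_n$. Fix $(x_1,\ldots,x_n)$ with $x_i$ invertible. The defining equation $\sum_j x_jy_j=z(1-z)$ can be solved for $y_i$:
\[
y_i=x_i^{-1}\Bigl(z(1-z)-\sum_{j\neq i}x_jy_j\Bigr).
\]
Hence $p_n^{-1}(U_i)\cong U_i\times \A^n$, with free coordinates $z$ and $y_j$ for $j\neq i$. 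The condition $(x_1,\ldots,x_n)\neq 0$ is automatic on $U_i$, and it is exactly what cuts out $X_0\cap X_1$. So over each $U_i$ the map $p_n$ is isomorphic to the projection $U_i\times\A^n\to U_i$.

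Next, we conclude using Zariski (hence Nisnevich) descent. The $U_i$ cover $\A^n\smallsetminus\{0\}$, and every finite intersection $U_I$ satisfies $p_n^{-1}(U_I)\cong U_I\times \A^n$, because the trivialization over any single $U_i$ with $i\in I$ restricts. Each projection $U_I\times\A^n\to U_I$ is an $\A^1$-weak equivalence. We then write $X_0\cap X_1$ and $\A^n\smallsetminus\{0\}$ as homotopy colimits of the Čech diagrams of the covers $\{p_n^{-1}(U_i)\}$ and $\{U_i\}$. These are Zariski covers, so in the Nisnevich-local model structure the Čech nerve is a weak equivalence to the base. The map of diagrams is an objectwise $\A^1$-equivalence, so the induced map on homotopy colimits is one as well. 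Alternatively, we can simply cite that a Zariski-locally trivial morphism with fibers $\A^m$ is an $\A^1$-weak equivalence, as in Morel–Voevodsky.

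The only slightly delicate point is the gluing: the local trivializations are not compatible, since the transition maps are affine but non-linear in $z$. This is why we would rely on the Čech/homotopy-colimit argument, or the general statement about locally trivial $\A^m$-fibrations, rather than trying to exhibit a vector bundle structure. The argument needs no compatibility of the trivializations, only that $p_n$ is trivial over each member $U_I$ of the Čech nerve.
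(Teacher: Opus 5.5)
Your proof is correct and is essentially the paper's argument: the paper also covers $\A^n\smallsetminus\{0\}$ by the opens $D(x_i)$ and observes that $p_n^{-1}(D(x_i))$ is an affine space over $D(x_i)$ with coordinates $(y_j)_{j\neq i}$ and $z$. The only difference is that you spell out the descent step, via the \v{C}ech nerve or the Morel--Voevodsky result on Zariski-locally trivial $\A^m$-fibrations, which the paper leaves implicit.
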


\begin{proof}
We can cover $\A^n\smallsetminus\{0\}$ by the open subschemes $D(x_i)=\{x_i\neq 0\}\subset \A^n\smallsetminus\{0\}$. The preimage of $D(x_i)$ under $p_n$ is an affine space over $D(x_i)$ with coordinates $y_1,\ldots,y_{i-1},y_{i+1},\ldots,y_n,z$. 
\end{proof}

We can construct an inverse (up to homotopy) of that map, using the morphism
\[
\alpha_n\colon Q_{2n-1}\times \A^1\to Q_{2n}
\]
given on coordinates by $(x_1,\ldots,x_n,y_1,\ldots,y_n,t)\mapsto (x_1,\ldots,x_n,y_1t(1-t),\ldots,y_nt(1-t),t)$. A direct computation shows that it factors through $X_0\cap X_1$, and that the composite 
\[
Q_{2n-1}\times \A^1\xrightarrow{\alpha_n} X_0\cap X_1\xrightarrow{p_n} \A^n\smallsetminus \{0\}
\]
For further use, we also note that $\alpha_n$ induces an isomorphism 
\[
\alpha_n\colon Q_{2n-1}\times (\A^1\smallsetminus \{0,1\})\simeq D(z(1-z))\subset Q_{2n}
\]
with inverse given $(x_1,\ldots,x_n,y_1,\ldots,y_n,z)\mapsto (x_1,\ldots,x_n,\frac {y_1}{z(1-z)},\ldots,\frac {y_n}{z(1-z)},z)$. 

\subsection{The universal rank $2$ bundle on $Q_4$}\label{sec:universal2bundle}

Recall from \cite{Panin21} that there is an isomorphism $Q_4\simeq \mathrm{H}\Proj^1$, where we see the right hand side as the open subscheme of the Grassmannian $\mathrm{Gr}(2,4)$ over which the symplectic form $H\perp H$ restricts to a nondegenerate symplectic form on the universal rank $2$ sub-bundle. We consider the Plücker embedding 
\[
\mathrm{Gr}(2,4)\to \mathbb{P}^5
\]
At a point $x\in  \mathrm{Gr}(2,4)$ given by a matrix
\[
0\to k(x)^2\xrightarrow{\begin{pmatrix} a_1 & b_1 \\ a_2 & b_2 \\ a_3 & b_3 \\ a_4 & b_4\end{pmatrix}} k(x)^4
\]
the condition to be non degenerate is given by $a_1b_2-a_2b_1+a_3b_4-a_4b_3\neq 0$. Using the Plücker coordinates 
\[
(u_1,u_2,u_3,u_4,u_5,u_6)=(a_1b_2-a_2b_1,a_1b_3-a_3b_1,a_1b_4-a_4b_1,a_2b_3-a_3b_2,a_2b_4-a_4b_2,a_3b_4-a_4b_3)
\]
this corresponds to $u_1+u_6\neq 0$. Setting $d:=u_1+u_6$, and using the coordinates 
\[
z:=\frac {a_1b_2-a_2b_1}d,x_1:=\frac {a_1b_3-a_3b_1}d, x_2:=\frac {a_1b_4-a_4b_1}d, y_1:=\frac {a_2b_4-a_4b_2}d, y_2:=\frac {a_3b_2-a_2b_3}d
\]
we obtain that the (unique) equation describing $\mathrm{Gr}(2,4)\cap D_+(d)$ is given by $x_1y_1+x_2y_2=z(1-z)$. 

Using this, we can describe the canonical rank $2$ bundle (and its canonical rank $2$ orthogonal) on $Q_4\simeq \mathrm{H}\Proj^1$. We will do it under the form of an idempotent matrix, that we now make explicit. We consider the commutative diagram
\[
\xymatrix{P\ar[r]^-{i}\ar[d]_-{\varphi} & R^4\ar[d]^-{H\perp H} \\
P^\vee & (R^4)^\vee\ar[l]^-{i^\vee}}
\]
where $R$ is the coordinate ring of $Q_4$, $P$ is the universal rank $2$ bundle and $\varphi\colon P\to P^\vee$ is the symplectic form induced by $H\perp H$. We then get an idempotent matrix using $i\varphi^{-1}i^\vee(H\perp H)$. 

At a point $x\in  \mathrm{Gr}(2,4)$ given by the above matrix $\begin{pmatrix} a_1 & b_1 \\ a_2 & b_2 \\ a_3 & b_3 \\ a_4 & b_4\end{pmatrix}$, a direct computation yields a matrix of the form
\[
\frac 1{d}\begin{pmatrix} a_1b_2-a_2b_1 & 0 & a_1b_4-a_4b_1 & a_3b_1-a_1b_3 \\ 0 & a_1b_2-a_2b_1 & a_2b_4-a_4b_2 & a_3b_2-a_2b_3 \\
a_3b_2-a_2b_3 & a_1b_3-a_3b_1 & a_3b_4-a_4b_3 & 0 \\ a_4b_2-a_2b_4 & a_1b_4-a_4b_1 & 0 & a_3b_4-a_4b_3\end{pmatrix}
\]
which corresponds to the matrix
\[
M:=\begin{pmatrix} z & 0 & x_2 & -x_1 \\ 0 & z & y_1 & y_2 \\
y_2 & x_1 & 1-z & 0 \\ -y_1 & x_2 & 0 & 1-z\end{pmatrix}.
\]
We let the reader check that this module coincides with the one in \cite[Example 4.3.8]{Asok14}.
The orthogonal of the rank $2$ universal subbundle is then given by the matrix
\[
N:=\mathrm{Id}-M=\begin{pmatrix} 1-z & 0 & -x_2 & x_1 \\ 0 & 1-z & -y_1 & -y_2 \\
-y_2 & -x_1 & z & 0 \\ y_1 & -x_2 & 0 & z\end{pmatrix}.
\]

\subsection{The Jouanolou device of $\mathbb{P}^n$}

Recall that the usual Jouanolou device $\mathbb{J}^n$ of $\Proj^n$ is given by the functor associating to a $k$-algebra $R$ the set of matrices $P\in M_{n+1}(R)$ of rank one satisfying $P^2=P$. We have an explicit presentation of the algebra representing this functor via
\[
k[\mathbb{J}^n]=k[x_{ij}\vert 1\leq i,j\leq n+1]/\left ( x_{ij}=\sum_{r=1}^{n+1} x_{ir}x_{rj}\forall 1\leq i,j\leq n+1, D\right)
\]
where $D$ is the set of all $2\times 2$ minors of $P$.
There is a universal matrix $P=(x_{ij})$ on $k[\mathbb{J}^n]$ which satisfies $P^2=P$ by the first set of equations. This yields a projector $P_R$ on $R^{n+1}$ for any homomorphism of $k$-algebras $k[\mathbb{J}^n]\to R$ and we set $L=\mathrm{Im}(P)\subset R^{n+1}$. This is a projective module and we can compute its rank by considering residue fields of $R$. If $F$ is a field we know that the matrix $P_F$ satisfies $P^2_F=P_F$ and it follows in particular that $P_F$ is diagonalizable with possible eigenvalues $0$ and $1$. The condition on the minors forces $P_F$ to be of rank one, and consequently $L$ is of rank one as required. 

\begin{rem}
If $k$ is of characteristic $0$, we can replace the conditions $D=0$ by the easier condition $\sum_{i=1}^{n+1}x_{ii}=1$, i.e. the matrix is of trace $1$.  
\end{rem}

We have an obvious morphism of schemes 
\[
p:\mathbb{J}^n\to \Proj^n
\]
corresponding to the line bundle $L$ and its generating sections given by $k[\mathbb{J}^n]^{n+1}\xrightarrow{P} L$. Concretely, let $c_1,\ldots,c_{n+1}$ (resp. $l_1,\ldots,l_{n+1}$) be the columns (resp. rows) of $P$. We set $U_i=\Jo^n\smallsetminus V(c_i)$, where $V(c_i):=V(x_{1i},\ldots,x_{(n+1)i})$ and we observe that $c_i$ is non-trivial on $U_i$, i.e. $c_i$ generates $L_{\vert U_i}$. On $U_i$, there exists then global sections $\lambda_{i,j}$ for $j=1,\ldots,n+1$ with $c_j=\lambda_{i,j}c_i$ and we obtain a map
\[
p_i:U_i\to \A^n
\]
given by the global sections $\lambda_{i,j}$, $i\neq j$. These maps glue together to yield the morphism $p:\mathbb{J}^n\to \Proj^n$. It turns out that $U_i$ is in fact isomorphic to $\A^{2n}$. To see this, we observe that we can write 
\[
P_{\vert U_i}=c_i\cdot \begin{pmatrix} \lambda_{i,1} & \ldots & \lambda_{i,i-1} & 1 & \lambda_{i,i+1} & \ldots & \lambda_{i,n+1}\end{pmatrix}.
\]
From $P^2=P$, we draw in particular that $P\cdot c_i=c_i$ and thus 
\[
\begin{pmatrix} \lambda_{i,1} & \ldots & \lambda_{i,i-1} & 1 & \lambda_{i,i+1} & \ldots & \lambda_{i,n+1}\end{pmatrix}\cdot c_i=1
\]
which reads as 
\[
\sum_{j=1, j\neq i}^{n+1}\lambda_{i,j}x_{ji}+x_{ii}=1.
\]
and $x_{ii}$ is determined by the other elements. This allows to define an isomorphism $\varphi_i:\A^{2n}\to U_i$ via
\[
(a_1,\ldots,a_n,b_1,\ldots,b_n)\mapsto \begin{pmatrix} a_1 \\ \vdots \\ a_{i-1} \\ 1-\sum_{j=1}^na_jb_j \\ a_i \\ \vdots \\ a_n\end{pmatrix}\cdot 
\begin{pmatrix} b_1 & \cdots & b_{i-1} & 1 & b_i & \cdots & b_n\end{pmatrix}.
\]
The point $0:=[0:\ldots:0:1]\in \mathbb{P}^n$ is contained in the image of the projection 
\[
p_{n+1}\colon U_{n+1}\to \A^n\subset \mathbb{P}^n
\]
and using $\varphi_{2n+1}$ we see that its preimage is of the form
\[
p_{n+1}^{-1}(\{0\})=\{ \begin{pmatrix} 0 & \hdots & 0 & a_1 \\ \vdots & \ddots & \vdots & \vdots \\ 0 & \hdots & 0 & a_n \\ 0 & \hdots & 0 & 1\end{pmatrix}\vert (a_1,\ldots,a_n)\in \A^n\}\simeq \A^n.
\] 
and we obtain a Cartesian square 
\begin{equation}\label{eqn:compatiblestructures}
\xymatrix{\A^n\ar[r]\ar[d] & \mathbb{J}^n\ar[d] \\
\{0\}\ar[r] & \mathbb{P}^n}
\end{equation}

\subsection{The map $\mathbb{J}^n\to Q_{2n}$}\label{sec:collapsemap}

Consider the ideal 
\[
I=\langle x_{ij}\vert 1\leq i\leq n+1, 1\leq j\leq n\rangle\subset k[\mathbb{J}^n].
\]
We claim that 
\[
I=\langle x_{(n+1)1},\ldots x_{(n+1),n},\sum_{i=1}^n x_{ii}\rangle=\langle x_{(n+1)1},\ldots x_{(n+1),n},1-x_{(n+1),(n+1)}\rangle=p_{n+1}^{-1}(\{0\}).
\]
 Indeed, for $1\leq i,j\leq n$ the $2\times 2$-minor
\[
\det \begin{pmatrix} x_{ij} & x_{i(n+1)} \\ x_{(n+1)j} & x_{(n+1)(n+1)}\end{pmatrix}
\]
is trivial, yielding
\[
x_{ij}(1-\sum_{i=1}^nx_{ii})=x_{ij}x_{(n+1)(n+1)}=x_{i(n+1)}x_{(n+1)j}.
\]
Thus $x_{ij}=x_{i(n+1)}x_{(n+1)j}+x_{ij}(\sum_{i=1}^nx_{ii})$ and the claim follows. Using again
\[
\det \begin{pmatrix} x_{ii} & x_{i(n+1)} \\ x_{(n+1)i} & x_{(n+1)(n+1)}\end{pmatrix}=0
\]
and summing up, we find
\[
\sum_{i=1}^nx_{i(n+1)}x_{(n+1)i}=(\sum_{i=1}^nx_{ii})x_{(n+1)(n+1)}=(\sum_{i=1}^nx_{ii})(1-\sum_{i=1}^nx_{ii})
\]
and it follows that $I/I^2$ is generated by the classes of $x_{(n+1)1},\ldots x_{(n+1),n}$. This yields a morphism 
\[
\pi_n\colon \mathbb{J}^n\to Q_{2n}
\] 
given by $(x_{ij})\mapsto (x_{(n+1)1},\ldots x_{(n+1),n},x_{1(n+1)},\ldots,x_{n(n+1)},\sum_{i=1}^nx_{ii})$. Moreover, the Cartesian square \eqref{eqn:compatiblestructures} yields a Cartesian square
\begin{equation}\label{eqn:commutativeinvariant}
\xymatrix{\mathbb{J}^n\smallsetminus \{\A^n\}\ar[r]\ar[d] & \mathbb{J}^n\ar[d] \\ 
\mathbb{P}^n\smallsetminus \{0\} \ar[r] & \mathbb{P}^n}
\end{equation}
the induced morphism on cofibers (using the purity isomorphism) reads as a morphism 
\[
\mathbb{J}^n\to (\A^n)_+\wedge (\pone)^{\wedge n}\to (\pone)^{\wedge n}
\]
for which $\pi_n$ is an explicit model.

\subsection{An explicit $\pone$-suspension map of maps between quadrics}
\label{sec:pone-suspension}
In this section, we provide an explicit way of $\pone$-suspending morphisms of quadrics, starting with the following observation: For $n\geq 1$, consider  $\A^{2n+2}$ with variables $x_1,\ldots,x_n,y_1,\ldots,y_n,t,u$. We have $f_n:=\sum_{i=1}^n x_iy_i-1$ and $g=tu-1$. These polynomials are irreducible, hence prime in $k[\A^{2n+2}]$, and
\[
V(f_n)=Q_{2n-1}\times \A^2, \phantom{i}V(g)=\A^{2n}\times \gm{}.
\]  
Since $f_n$ and $g$ are prime, we have $(f_n)\cap (g)=(f_n\cdot g)$ and obtain in view of \cite[Thm. 3.4, Thm. 3.5, Cor. 3.9]{Schwede05} a cocartesian square of schemes (whose arrows are the closed immersions given by the above polynomials)
\[
\xymatrix{Q_{2n-1}\times \gm{}\ar[r]\ar[d] & Q_{2n-1}\times \A^2\ar[d] \\
\A^{2n}\times \gm{}\ar[r] & X_{2n+1}}
\]
where $X_{2n+1}=V(f_n\cdot g)\subset \A^{2n+2}$. Arguing as in \cite[Section 3, Lemma 1.6]{Morel99}, it is not hard to see that $X_{2n+1}$ is, in fact, a push-out in the category of Nisnevich sheaves of sets on $\mathrm{Sm}_k$. In fact, we may use \cite[\S 7.3]{Morel08} to see that it is a model of the reduced join of the corners of the diagram, which is  $\Sigma Q_{2n-1}\wedge \gm{}\simeq Q_{2n+1}$. It is slightly inconvenient to work with singular schemes, and we now provide an explicit weak equivalence $X_{2n+1}\to  Q_{2n+1}$. We have morphisms 
\[
\varphi_1:Q_{2n-1}\times \A^2\to Q_{2n+1}
\]
given by $(x_1,\ldots,x_n,y_1,\ldots,y_n,t,u)\mapsto (x_1,\ldots,x_n,t,y_1,\ldots,y_n,0)$
and 
\[
\varphi_2:\A^{2n}\times \gm{}\to Q_{2n+1}
\]
defined by $(x_1,\ldots,x_n,y_1,\ldots,y_n,t,u)\mapsto (x_1,\ldots,x_n,t,y_1,\ldots,y_n,u(1-\sum_{i=1}^nx_iy_i))$. It is straightforward to check that these maps induce a map 
\[
\varphi:X_{2n+1}\to Q_{2n+1}.
\]
Concretely, the morphism $\varphi$ is given by
\[
k[x_1,\ldots,x_{n+1},y_1,\ldots,y_{n+1}]/f_{n+1} \to k[x_1,\ldots,x_n,y_1,\ldots,y_n,u,t]/(f_n\cdot g)
\]
with $(x_1,\ldots,x_n,y_1,\ldots,y_n)\mapsto (x_1,\ldots,x_n,y_1,\ldots,y_n)$, $x_{n+1}\mapsto t$ and $y_{n+1}\mapsto uf_n$.  To prove that it is a weak equivalence, we observe that the projection $\pi\colon Q_{2n+1}\to \A^{n+1}\smallsetminus\{0\}$ onto the first $n+1$-coordinates is a weak equivalence since its fibers are affine spaces of dimension $n$.  We can consider the preimages of the open subschemes $\A^n\times \gm{}$ and $(\A^n\smallsetminus\{0\})\times \A^1$ denoted respectively by $D(x_{n+1})$ and $U_n$. For the same reasons as above, the projections $\pi\colon D(x_{n+1})\to \A^n\times \gm{}$ and $\pi\colon U_n\to (\A^n\smallsetminus\{0\})\times \A^1$ are weak equivalences, and the same holds on the intersection $U_n\cap D(x_n)$ which becomes weakly equivalent to $(\A^n\smallsetminus\{0\})\times \gm{}$. Now, the morphism 
\[
\varphi_1\colon Q_{2n-1}\times \A^2\to Q_{2n+1}
\]
factorizes through $U_n$ by definition. This induces a weak equivalence $\varphi_1\colon  Q_{2n-1}\times \A^2\to U_n$ since its composite with the projection  $\pi\colon U_n\to (\A^n\smallsetminus\{0\})\times \A^1$ corresponds to $(x_1,\ldots,x_n,y_1,\ldots,y_n,t,u)\mapsto (x_1,\ldots,x_n,t)$. The same argument shows that the morphism
\[
\varphi_2:\A^{2n}\times \gm{}\to Q_{2n+1}
\]
also induces a weak equivalence $\A^{2n}\times \gm{}\to D(x_{n+1})$, while both morphisms induce weak equivalences $ Q_{2n-1}\times\gm{}\to  U_n\cap D(x_{n+1})$. Using  \cite[Section 3, Lemma 1.6]{Morel99}, we obtain a push-out square in $\mathcal{H}(k)$ of the form
\[
\xymatrix{U_n\cap D(x_{n+1})\ar[r]\ar[d] &  D(x_{n+1})\ar[d] \\
 U_n\ar[r] & Q_{2n+1}}
\] 
and the previous arguments show that $\varphi\colon X_{2n+1}\to Q_{2n+1}$ is a weak equivalence.

We proceed in an analogous manner for $Q_{2n}$. The details are as follows. For $n\geq 1$, consider $\A^{2n+3}$ with variables $x_1,\ldots,x_n,y_1,\ldots,y_n,z,t,u$. Set $h_n=\sum_{i=1}^nx_iy_i-z(1-z)$ and $g=tu-1$. The same argument as before yields a cocartesian square 
\[
\xymatrix{Q_{2n}\times \gm{}\ar[r]\ar[d] & Q_{2n}\times \A^2\ar[d] \\
\A^{2n+1}\times \gm{}\ar[r] & Y_{2n+2}}
\]
where $Y_{2n+2}=V(h_n\cdot g)\subset \A^{2n+3}$. We have morphisms
\[
\psi_1:Q_{2n}\times \A^2\to Q_{2n+2}
\]
given by $(x_1,\ldots,x_n,y_1,\ldots,y_n,z,t,u)\mapsto (x_1,\ldots,x_n,t,y_1,\ldots,y_n,0,z)$
and 
\[
\psi_2:\A^{2n+1}\times \gm{}\to Q_{2n+2}
\]
defined by $(x_1,\ldots,x_n,y_1,\ldots,y_n,z,t,u)\mapsto (x_1,\ldots,x_n,t,y_1,\ldots,y_n,u(z(1-z)-\sum_{i=1}^nx_iy_i),z)$. They induce a morphism 
\[
\psi:Y_{2n+2}\to Q_{2n+2}
\]
which is concretely given by 
\[
k[x_1,\ldots,x_{n+1},y_1,\ldots,y_{n+1},z]/h_{n+1}\to k[x_1,\ldots,x_{n},y_1,\ldots,y_{n},z,t,u]/(h_{n}\cdot g)
\]
with $(x_1,\ldots,x_{n},y_1,\ldots,y_{n},z)\mapsto (x_1,\ldots,x_{n},y_1,\ldots,y_{n},z)$, $x_{n+1}\mapsto t$ and $y_{n+1}\mapsto u(z(1-z)-\sum_{i=1}^nx_iy_i)$.

To see that $\psi$ is a weak equivalence, observe that we have open contractible subschemes $X_i\subset Q_{2n}$ for $i=0,1$. The morphism
\[
\psi_1:Q_{2n}\times \A^2\to Q_{2n+2}
\]
induces a morphism $X_0\times \A^2\to Q_{2n+2}$, which is easily seen to factor through the contractible subscheme $X_0':=D(x_1,\ldots,x_{n+1},z)\subset Q_{2n+2}$. Similarly, the morphism 
\[
\psi_2:\A^{2n+1}\times \gm{}\to Q_{2n+2}
\]
factors through $X_0'$ and we obtain a morphism from the push-out $P_0$ of the diagram
\[
\xymatrix{X_0\times \gm{}\ar[r]\ar[d] & X_0\times \A^2 \\
\A^{2n+1}\times \gm{} &  }
\]
to $X_0'$. Since $X_0$ and $\A^{2n+1}$ are both contractible, it is straightforward to check that the push-out is contractible and thus the induced map $Q_0\to X_0'$ is a weak equivalence. The same arguments apply replacing $X_0$ by $X_1$ obtaining a push-out $P_1$. Taking the intersection we thus obtain a morphism of the push-out $P_{01}$ of the diagram 
\[
\xymatrix{(X_0\cap X_1)\times \gm{}\ar[r]\ar[d] & (X_0\cap X_1)\times \A^2 \\
\A^{2n+1}\times \gm{} & }
\]
to $X'_{0}\cap X'_1$. The push-out of the diagram involving $P_{01},P_0,P_1$ is $Y_{2n+2}$, while the push-out of the diagram involving $X'_0\cap X'_1,X'_0,X'_1$ is $Q_{2n+2}$. This proves that the map from $Y_{2n+2}$ to $Q_{2n+2}$ is a weak equivalence. Once again, we think about $Y_{2n+2}$ as the singular $\pone$-suspension of $Q_{2n}$.

The construction described above allows to explicitly construct the $\pone$-suspension of a morphism $f\colon Q_{2n-1}\to Q_{2m}$ defined by mapping $(x_1,\ldots,x_n,y_1,\ldots,y_n)$ to $(f_1, \ldots, f_n, \tilde{f_1},\ldots, \tilde{f}_n, f_z)$. First note that an equivalent condition for $f$ being a morphism $Q_{2n-1}\to Q_{2m}$ is that the ideal 
\[
\langle f_1, \ldots, f_n, \sum_{i=1}^n x_iy_i -1 \rangle
\] 
in the ring $k[\A^{2n}]$ contains the element $f_z(1-f_z)$.

One observes that $f$ lifts to a map $F:\A^{2n}\to \A^{2m+1}$ given by the same formula, and to a map $(F)\times \mathrm{Id}_2:\A^{2n+2}\to \A^{2m+3}$ which descends to a map
\[
X_{2n+1}\to Y_{2m+2}
\]
and thus to a map $X_{2n+1}\to Q_{2m+2}$ which is given by mapping $(x_1,\ldots,x_n,y_1,\ldots,y_n,t,u)$ to 
\[
(f_1,\ldots f_n,t, \tilde f_1 ,\ldots\tilde f_n,\alpha,f_z)
\]
where $\alpha$ is an element in $k[X_{2n}]$ such that $\sum f_i\tilde f_i + t\alpha = f_z(1-f_z)$. Since $f\colon Q_{2n-1} \to Q_{2m}$ is a morphism, there exists an element $r$ such that $\sum f_i \tilde f_i + r(\sum x_i y_i -1) = f_z(1-f_z)$ in $k[\A^{2n}]$. By letting $\alpha = ur(\sum x_i y_i -1)$, we see that 
$\sum f_i\tilde f_i + t\alpha - r(tu-1)(\sum x_i y_i -1) = f_z(1-f_z)$ in $k[\A^{2n+2}].$

We then are left to find a factorization 
\[
X_{2n+1}\to Q_{2n+1}\to Q_{2m+2}
\]
of the above map. With a bit of thought, we realize that we may consider 
\[
Q_{2n+1}\to Q_{2m+2}
\]
given by the mapping $(x_1,\ldots, x_{n+1}, y_1, \ldots, y_{n+1})$ to 
\[
(f_1, \ldots, f_n, x_{n+1},\tilde{f}_1, \ldots,\tilde f_n, y_{n+1}r,  f_z)
\]
which does the job.

Doing the same arguments for maps $Q_{2n-1}\to Q_{2m-1}, Q_{2n}\to Q_{2m-1}$ and $Q_{2n}\to Q_{2m}$ give us the following results.

\begin{lem}
\label{lem:ponesuspoddeven}
The $\pone$-suspension of a map $f:Q_{2n-1}\to Q_{2m}$ (resp.$f:Q_{2n}\to Q_{2m})$  given by the ideal $\langle f_1, \ldots, f_n, f_z \rangle $ is the map $F:Q_{2n+1}\to Q_{2m+2}$ (resp. $F:Q_{2n+2}\to Q_{2m+2}$) given by the ideal $\langle f_1, \ldots, f_n, x_{n+1},f_z \rangle$.
\end{lem}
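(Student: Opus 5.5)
Our plan is to reproduce, in both cases, the argument given just above for maps $Q_{2n-1}\to Q_{2m}$. We then check that the resulting formula only depends on the ideal $\langle f_1,\ldots,f_n,f_z\rangle$, i.e. on the components that determine the map up to the choice of the remaining coordinates. Here ``the map given by the ideal'' means any morphism whose $x$-coordinates and $z$-coordinate are the listed generators. By the criterion recalled before the statement, the remaining coordinates exist and are unique up to homotopy, since such maps are all homotopic through the linear homotopy on the $y$-coordinates, which stays inside $Q_{2m+2}$.

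\emph{Odd source.} For $f\colon Q_{2n-1}\to Q_{2m}$ the argument is already written. Lift $f$ to $F\colon\A^{2n}\to\A^{2m+1}$ and form $F\times\mathrm{Id}\colon \A^{2n+2}\to\A^{2m+3}$. Then:
\begin{itemize}
\item Check that it sends $V(f_n\cdot g)$ into $V(h_m\cdot g)$. This holds because $h_m\circ F\in (f_n)$, by the relation $\sum f_i\tilde f_i+r(\sum x_iy_i-1)=f_z(1-f_z)$.
\item This gives a map $X_{2n+1}\to Y_{2m+2}$. By naturality of the cocartesian squares, it is the singular $\pone$-suspension of $f$, i.e.\ $f\wedge\mathrm{id}$ on the join model of $\Sigma(-)\wedge\gm{}$.
\item Compose with $\psi$ and precompose with the inverse of $\varphi$ in $\mathcal H(k)$. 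Since $\varphi,\psi$ are weak equivalences identifying the join models with $Q_{2n+1},Q_{2m+2}$, the result represents $\Sigma_{\pone}f$.
\item Verify that the explicit map $Q_{2n+1}\to Q_{2m+2}$, $(x,x_{n+1},y,y_{n+1})\mapsto(f,x_{n+1},\tilde f,y_{n+1}r,f_z)$, composed with $\varphi$, agrees with $\psi\circ(F\times\mathrm{Id})$. Using $y_{n+1}\mapsto u f_n$, this reduces to $u f_n r=\alpha$, which is the chosen $\alpha$.
\end{itemize}
Its ideal is $\langle f_1,\ldots,f_n,x_{n+1},f_z\rangle$.

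\emph{Even source.} For $f\colon Q_{2n}\to Q_{2m}$ we do the same with $Y_{2n+2}$ in place of $X_{2n+1}$ and $h_n$ in place of $f_n$. Write
\[
\sum f_i\tilde f_i - f_z(1-f_z)=r\,h_n .
\]
Then $F\times\mathrm{Id}$ sends $V(h_n g)$ into $V(h_m g)$, and after composing with $\psi$ for $Q_{2m+2}$ we obtain
\[
(x,y,z,t,u)\mapsto \bigl(f,t,\tilde f,u(f_z(1-f_z)-\textstyle\sum f_i\tilde f_i),f_z\bigr)=(f,t,\tilde f,-urh_n,f_z).
\]
This factors through $\psi\colon Y_{2n+2}\to Q_{2n+2}$, since $y_{n+1}\mapsto u(z(1-z)-\sum x_iy_i)=-uh_n$. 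The factorization is the map $Q_{2n+2}\to Q_{2m+2}$ given by
\[
(x,x_{n+1},y,y_{n+1},z)\mapsto(f,x_{n+1},\tilde f,ry_{n+1},f_z).
\]
One checks directly that this lands in $Q_{2m+2}$: indeed $\sum f_i\tilde f_i+x_{n+1}ry_{n+1}=f_z(1-f_z)+r(\sum_{i\le n+1}x_iy_i-z(1-z))$, and the last term vanishes on $Q_{2n+2}$. Its ideal is again $\langle f_1,\ldots,f_n,x_{n+1},f_z\rangle$.

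\emph{Main obstacle.} The delicate step is the identification of the map $X_{2n+1}\to Y_{2m+2}$ (resp.\ $Y_{2n+2}\to Y_{2m+2}$) with $\Sigma_{\pone}f$. To handle it, we use functoriality of the homotopy push-out: $F\times\mathrm{Id}$ is a map of cocartesian squares, restricting to $f\times\mathrm{id}_{\gm{}}$, $f\times\mathrm{id}_{\A^2}$ and $F\times\mathrm{id}_{\gm{}}$ on the corners. Via \cite[\S 7.3]{Morel08}, the induced map on joins is $\Sigma f\wedge\gm{}=\Sigma_{\pone}f$. Finally, the identifications $\varphi,\psi$ are compatible with the chosen equivalences $Q_{2n+1}\simeq\Sigma Q_{2n-1}\wedge\gm{}$ and $Q_{2n+2}\simeq\Sigma Q_{2n}\wedge\gm{}$, provided we fix these equivalences to be $\varphi$ and $\psi$ themselves, which we will do.
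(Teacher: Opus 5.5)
Your proposal follows the paper's own argument: lift $f$ to $F\times\mathrm{Id}$ on the singular models $X_{2n+1}$, $Y_{2n+2}$, compose with $\psi$, and factor through $\varphi$ (resp.\ $\psi$). Two of your additions go beyond what the paper writes: the explicit even-source computation, which the paper leaves as ``the same arguments'' and which you carry out with correct signs, and the remark that the linear homotopy in the $y$-coordinates makes the class depend only on the ideal.

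In the odd case you inherited a sign slip from the paper's ``concrete'' formula $y_{n+1}\mapsto uf_n$ for $\varphi$. In fact $\varphi_2$ sends $y_{n+1}\mapsto u(1-\sum x_iy_i)=-uf_n$. So, with $r$ defined by $\sum f_i\tilde f_i+rf_n=f_z(1-f_z)$, the factorization must use $-ry_{n+1}$; the map with $+ry_{n+1}$ does not in general land in $Q_{2m+2}$. This changes nothing about the ideal.
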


\begin{lem}
\label{lem:ponesuspevenodd}
The $\pone$-suspension of a map $f:Q_{2n}\to Q_{2m-1}$ (resp. $f:Q_{2n-1}\to Q_{2m-1}$) given by the unimodular row $(f_1, \ldots, f_n)$ is the map $F:Q_{2n+2}\to Q_{2m+1}$ (resp. $F:Q_{2n+1}\to Q_{2m+1}$) given by the unimodular row $(f_1, \ldots, f_n, x_{n+1})$.
\end{lem}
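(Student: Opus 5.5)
We prove Lemma \ref{lem:ponesuspevenodd} by running the template used above for $f\colon Q_{2n-1}\to Q_{2m}$, now with target an odd quadric. We treat the case $f\colon Q_{2n-1}\to Q_{2m-1}$; the case $Q_{2n}\to Q_{2m-1}$ is identical once $X_{2n+1}$, $\varphi$ are replaced by $Y_{2n+2}$, $\psi$.

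Write $f$ as $(f_1,\ldots,f_m,\tilde f_1,\ldots,\tilde f_m)$, where $(\tilde f_i)$ is a choice of sections exhibiting $(f_1,\ldots,f_m)$ as unimodular. The condition that $f$ lands in $Q_{2m-1}$ means there is $r\in k[\A^{2n}]$ with
\[
\sum_i f_i\tilde f_i-1=r\Bigl(\sum_j x_jy_j-1\Bigr).
\]
The homotopy class of the morphism depends only on the unimodular row: changing the sections $(\tilde f_i)$ moves along an affine space of choices, hence gives $\A^1$-homotopic maps. So "the map given by the unimodular row" is well defined in $[-,Q_{2m-1}]_{\aone}$.

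\textbf{Step 1 (lift to the singular suspension).} Extend $f$ to $F\times\mathrm{Id}\colon\A^{2n+2}\to\A^{2m+2}$. This map sends $V(f_n\cdot g)$ into $V(f_m\cdot g)$, since $f_m\circ F$ is a multiple of $f_n$. It therefore descends to $X_{2n+1}\to X_{2m+1}$, and by construction of the cocartesian squares this descended map is the $\pone$-suspension (the join with $\gm{}$, suspended) of $f$.

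\textbf{Step 2 (compose with $\varphi$).} Composing with $\varphi\colon X_{2m+1}\to Q_{2m+1}$ gives
\[
(x,y,t,u)\mapsto\bigl(f_1,\ldots,f_m,t,\tilde f_1,\ldots,\tilde f_m,\,u(1-\textstyle\sum f_i\tilde f_i)\bigr)
=\bigl(f,t,\tilde f,\,-ur(\textstyle\sum x_jy_j-1)\bigr).
\]

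\textbf{Step 3 (factor through $Q_{2n+1}$).} Define $G\colon Q_{2n+1}\to Q_{2m+1}$ by
\[
(x_1,\ldots,x_{n+1},y_1,\ldots,y_{n+1})\mapsto\bigl(f_1,\ldots,f_m,x_{n+1},\tilde f_1,\ldots,\tilde f_m,-ry_{n+1}\bigr).
\]
It lands in $Q_{2m+1}$ because, on $Q_{2n+1}$, $\sum_{j\le n}x_jy_j-1=-x_{n+1}y_{n+1}$, so
\[
\sum f_i\tilde f_i + x_{n+1}(-ry_{n+1}) = 1 + r\Bigl(\sum_{j\le n}x_jy_j-1\Bigr) + r\,x_{n+1}y_{n+1}=1.
\]
Under $\varphi$ we have $x_{n+1}\mapsto t$ and $y_{n+1}\mapsto uf_n$, so $G\circ\varphi$ equals the composite of Step 2. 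Since $\varphi$ is a weak equivalence, $[G]=\Sigma_{\pone}[f]$. The unimodular row underlying $G$ is $(f_1,\ldots,f_m,x_{n+1})$, as claimed.

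\textbf{Main obstacle.} The delicate point is Step 1: justifying that the descended map $X_{2n+1}\to X_{2m+1}$ models $\Sigma_{\pone}f$. This requires that the pushout description be natural in the quadric, i.e. that the join/suspension identification of \cite[\S 7.3]{Morel08} is functorial with respect to $f\times\mathrm{Id}_{\gm{}}$. Granting that naturality, as the paper does for the $Q_{2m}$ target, the rest is the direct computation in Steps 2 and 3.
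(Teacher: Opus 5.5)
Your argument is the paper's. For this lemma the paper only says that the construction given for $f\colon Q_{2n-1}\to Q_{2m}$ carries over: extend to affine space, descend to the singular suspensions, compose with $\varphi$ or $\psi$, then factor through the smooth quadric. Your Steps 1--3 carry this out for an odd target, and you grant the naturality of the join model exactly as the paper does. Your observation that the class depends only on the row, via the straight-line homotopy $\tilde f+s(\tilde f'-\tilde f)$ of sections, is a useful point the paper leaves implicit.

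There is, however, a sign error in Step 3, and as written $G$ is not a morphism to $Q_{2m+1}$. Since $x_{n+1}(-ry_{n+1})=-r\,x_{n+1}y_{n+1}$, your landing check actually reads
\[
\sum_i f_i\tilde f_i+x_{n+1}(-ry_{n+1})=1+r\Bigl(\sum_{j\le n}x_jy_j-1\Bigr)-r\,x_{n+1}y_{n+1}=1-2r\,x_{n+1}y_{n+1}
\]
on $Q_{2n+1}$. For example, for $f=\mathrm{Id}$ (so $r=1$) your $G$ is $(x_1,\ldots,x_{n+1},y_1,\ldots,y_n,-y_{n+1})$, which does not land in $Q_{2n+1}$.

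The slip comes from using two different formulas for $\varphi$. Step 2 uses the correct $y_{m+1}\mapsto u(1-\sum_i X_iY_i)$ from the definition of $\varphi_2$. Step 3 instead uses the text's shorthand $y_{n+1}\mapsto uf_n=u(\sum_j x_jy_j-1)$, which drops this sign; the text's formula $y_{n+1}r$ in the $Q_{2m}$-target case has the same problem.

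The fix is to take the last coordinate of $G$ to be $+ry_{n+1}$. Then $\sum_i f_i\tilde f_i+r\,x_{n+1}y_{n+1}=1+r(\sum_{j\le n+1}x_jy_j-1)=1$ on $Q_{2n+1}$. Also $G\circ\varphi$ has last coordinate $ru(1-\sum_{j\le n}x_jy_j)=-ur(\sum_{j\le n}x_jy_j-1)$, matching Step 2. The same correction works in the $Q_{2n}$-source case with $\psi$: take last coordinate $+ry_{n+1}$, where $\sum_i f_i\tilde f_i-1=rh_n$. Since the lemma only records the row $(f_1,\ldots,f_m,x_{n+1})$, its statement is unaffected.
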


\begin{rem}
The maps of the two above lemmas are actually pointed, if we choose $(1,0\ldots,0,1,0,\ldots,0)$ as the base point of $Q_{2n+1}$ and $(0,\ldots,0)$ as the base point of $Q_{2m}$ in the first case, and $(1,0\ldots,0,1,0,\ldots,0)$ as the base point of $Q_{2n+1}$ and $(0,\ldots,0)$ as the base point of $Q_{2m}$ in the second case.
\end{rem}
\subsection{Endomorphisms of quadrics of arbitrary $\A^1$-Brouwer degree}
In this section we give a procedure for constructing morphisms $Q_n \to Q_n$ representing each element in $\mathrm{GW}(k)$, for $n\geq 2$. We begin by constructing the morphisms in the cases $[Q_2,Q_2]_{\A^1}$ and $[Q_3,Q_3]_{\A^1}$ and then suspend accordingly using Lemma \ref{lem:ponesuspevenodd} and \ref{lem:ponesuspoddeven}. We start with considering endomorphisms of $Q_3$. 
\begin{lem}
    The map $Q_3 \to Q_3$ given by the unimodular row $(x_1, ux_2)$ represents the class $\langle u \rangle \in \mathrm{GW}(k)$ and the row $(y_1, -ux_2)$ represents the class $-\langle u \rangle \in \mathrm{GW}(k)$
\end{lem}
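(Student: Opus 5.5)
The plan is to use the identification $Q_3\simeq \mathrm{SL}_2$, read each map as an operation on matrices, and then compute degrees through the projection $Q_3\to \A^2\smallsetminus\{0\}$. Concretely, a point $(x_1,x_2,y_1,y_2)$ of $Q_3$ corresponds to the matrix $\begin{pmatrix} x_1 & x_2\\ -y_2 & y_1\end{pmatrix}$, whose determinant is $x_1y_1+x_2y_2=1$. A morphism $Q_3\to Q_3$ given by a unimodular row $(a,b)$ means a completion $(a,b,c,d)$ with $ac+bd=1$. The projection $Q_3\to \A^2\smallsetminus\{0\}$, $(x_1,x_2,y_1,y_2)\mapsto (x_1,x_2)$, has affine-line fibres and is therefore a weak equivalence. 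Hence the homotopy class of such a morphism depends only on the row $(a,b)$, viewed as a map to $\A^2\smallsetminus\{0\}$. We also use that composition in $[Q_3,Q_3]_{\A^1}\cong \mathrm{GW}(k)$ corresponds to multiplication in $\mathrm{GW}(k)$.

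\emph{First map.} Complete the row $(x_1,ux_2)$ to the morphism $(x_1,x_2,y_1,y_2)\mapsto (x_1,ux_2,y_1,u^{-1}y_2)$. After the projection, this is the linear automorphism $\mathrm{diag}(1,u)$ of $\A^2\smallsetminus\{0\}$. Write $\A^2\smallsetminus\{0\}\simeq S^1\wedge\gm{}\wedge\gm{}$, compatibly with the coordinates. Then $\mathrm{diag}(1,u)$ is the identity smashed with multiplication by $u$ on one $\gm{}$-factor. By Morel's computation this multiplication has degree $\langle u\rangle$ in $[\gm{},\gm{}]$, and the degree is stable under the smash product. So the first class is $\langle u\rangle$. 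Alternatively, one can quote the standard fact that a linear automorphism of $\A^n\smallsetminus\{0\}$ has $\A^1$-degree $\langle\det\rangle$.

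\emph{Second map.} Complete the row $(y_1,-ux_2)$ to $(x_1,x_2,y_1,y_2)\mapsto (y_1,-ux_2,x_1,-u^{-1}y_2)$; this lands in $Q_3$ since $y_1x_1+(-ux_2)(-u^{-1}y_2)=1$. For $u=1$, the corresponding matrix is $\begin{pmatrix} y_1 & -x_2\\ y_2 & x_1\end{pmatrix}$, which is exactly the inverse of $\begin{pmatrix} x_1 & x_2\\ -y_2 & y_1\end{pmatrix}$. So the row $(y_1,-x_2)$ is group inversion on $\mathrm{SL}_2$. For general $u$, the second map is inversion followed by the first map, so its class is $\langle u\rangle\cdot[\mathrm{inv}]$. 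It therefore remains to show $[\mathrm{inv}]=-1$.

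\emph{Main obstacle: the class of inversion.} Since $Q_3\simeq S^1\wedge\gm{}^{\wedge 2}$ is a simplicial suspension, $[Q_3,Q_3]_{\A^1}$ carries two group laws. One comes from the co-H structure and is the addition of $\mathrm{GW}(k)$. The other is pointwise multiplication using the group structure of $\mathrm{SL}_2$. By the Eckmann–Hilton argument these two laws coincide. Consequently the pointwise inverse of $\mathrm{Id}$, namely $\mathrm{inv}$, is the additive inverse of $[\mathrm{Id}]=1$, so $[\mathrm{inv}]=-1$. The care needed is with base points: we work with $(1,0,1,0)$, i.e. the identity matrix, and must check that the relevant maps are pointed, or else pass through free homotopy classes, which are equal to pointed ones here because $Q_3$ is $\A^1$-simply connected. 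We must also check that the GW-identification is compatible with the sum. Granting this, the second map has class $\langle u\rangle\cdot(-1)=-\langle u\rangle$.
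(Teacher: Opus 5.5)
Your proof is correct and is essentially the paper's argument. The first class is the degree $\langle u\rangle$ of $\mathrm{diag}(1,u)$ on $\A^2\smallsetminus\{0\}$, which the paper simply cites from Kass--Wickelgren. The second class comes from the fact that the pointwise $\mathrm{SL}_2$ group law on $[Q_3,Q_3]_{\A^1}$ is the addition of $\mathrm{GW}(k)$. You justify this by Eckmann--Hilton, whereas the paper just notes that the second row is the pointwise inverse $m_u^{-1}$ of $m_u$. Your $f_u\circ\mathrm{inv}$ equals $\mathrm{inv}\circ f_u$, since $f_u$ is conjugation by $\mathrm{diag}(u,1)$, so your detour through multiplicativity of composition is not needed.

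One aside is wrong, though harmless. $Q_3\simeq\A^2\smallsetminus\{0\}$ is not $\A^1$-simply connected, since $\piaone_1(Q_3)=\KMW{2}$. However, both maps fix the identity matrix. Moreover, free and pointed classes agree anyway, because $\mathrm{SL}_2$ is an $\A^1$-connected group object and $\piaone_1$ therefore acts trivially.
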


\begin{proof}
    For a unit $u\in k^\times$, the map $\A^2 \smallsetminus \{0\} \to \A^2 \smallsetminus \{0\}$ given on coordinates by $(x_1, x_2) \mapsto (x_1,ux_2)$ has $\A^1$-Brouwer degree $\langle u \rangle $ \cite[Lemma 9]{Kass19}. Since $Q_3 \simeq \A^2 \smallsetminus \{0\}$, this endomorphism lifts to the map $Q_3 \to Q_3$ given by the $\mathrm{SL}_2(k[Q_3])$-matrix. \[
    m_u = \begin{pmatrix}
        x_1 & ux_2\\ -u^{-1}y_2  & y_1
    \end{pmatrix}.
    \]
    The group structure on $[Q_3,Q_3]_{\A^1}$ is given by the group structure on $\mathrm{SL}_2$, hence the morphism representing $-\langle u\rangle $ is given by the inverse matrix 
    \[
   m_u^{-1} =  \begin{pmatrix}
        x_1 & ux_2\\ -u^{-1}y_2  & y_1
    \end{pmatrix}^{-1} =\begin{pmatrix}
        y_1 & -ux_2\\ u^{-1}y_2  & x_1
    \end{pmatrix} .
    \]
\end{proof}
Given an element in $q \in \mathrm{GW}(k)$, one does the following to find a morphism $Q_3 \to Q_3 $ with $\A^1$-Brouwer degree $q$:
\begin{enumerate}
    \item Write $q$ as a sum $q = \langle u_1, \ldots, u_r\rangle - \langle v_1, \ldots, v_s\rangle$. 
    \item Compute the matrix product $M_q = m_{u_1} \ldots m_{u_r} m_{v_1}^{-1} \ldots m_{v_s}^{-1}.$
    \item The first row of $M_q$ is a unimodular row with $\A^1$-Brouwer degree $q.$
\end{enumerate}
We will now consider $Q_2$-endomorphisms. Note that $Q_2 \simeq \mathbb{J}^1\simeq \pone$ and that in \cite{Barth25} the group structure on $[Q_2,\Proj^1]_{\A^1}$ is studied. The first complication is that $[Q_2,Q_2]_{\A^1}$ is not in the stable range, and we have $[Q_2,Q_2]_{\A^1} \cong \mathrm{GW}(k) \times _{k^\times / k^{\times 2}} k^\times$, but under stabilization, this map is just projection onto $\mathrm{GW}(k)$ (use for instance \cite[Theorem 6.37]{Morel08}. The group operation on $[Q_2,\Proj^1]_{\A^1}$ is in general not easy to describe, but the operation of adding a rank $0$ map to any other map can be described easily. We first begin by describing the generators of the rank $0$ maps. 
\begin{lem} 
\label{lem:q2rank0maps}
    Let $u,v \in k^\times$, then the stable class of the morphism $f_{u,v}\colon Q_2 \to Q_2 $ given by the ideal $\langle \frac{(u-v)}{uv}x_1,  z + \frac{v}{u}(1-z)\rangle$ is $\langle u\rangle - \langle v\rangle$. 
\end{lem}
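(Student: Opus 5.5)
The plan is to compute the stable class through a local $\A^1$-degree count. By \cite[Theorem 6.37]{Morel08}, stabilization $[Q_2,Q_2]_{\A^1}\to \mathrm{GW}(k)$ is the projection onto the $\mathrm{GW}(k)$-factor, and this factor is the $\A^1$-Brouwer degree. So it suffices to show that $\deg^{\A^1}(f_{u,v})=\langle u\rangle-\langle v\rangle$.

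First I will make $f_{u,v}$ explicit and check that it is a morphism. Put $c=\frac{u-v}{uv}$. The morphism sends $(x,y,z)$ to $(X,Y,Z)$ with $X=cx$ and
\[
Z=z+\tfrac vu(1-z)=\tfrac vu+\tfrac{u-v}{u}z,\qquad 1-Z=\tfrac{u-v}{u}(1-z).
\]
Following the criterion stated before Lemma \ref{lem:ponesuspoddeven}, $f_{u,v}$ is a morphism provided $Z(1-Z)$ lies in the ideal $\langle cx, xy-z(1-z)\rangle$. In that case $Y$ is the corresponding cofactor, and I will write it out. Two degenerate cases are immediate: for $u=v$ the map is constant, of degree $0$, consistent with the formula.

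Next I will use the model $Q_2\simeq\pone$ given by $(x,y,z)\mapsto[x:z]=[1-z:y]$, and choose a convenient rational point $q$ in the target. A good candidate is $q=(0,0,0)$, i.e. $X=0$, $Z=0$, which corresponds to the point $0\in\pone$. Its preimage is contained in $\{x=0,\ z=-\tfrac{v}{u-v}\}$, and on the quadric the equation $xy=z(1-z)$ must also hold there. I expect the fibre to be étale over $q$ and to consist of two rational points, or of points where the computation is transparent. If $q$ turns out to be a non-regular value (the choice $X=0$ is suspicious, since $x=0$ forces $z\in\{0,1\}$), I will instead take a general rational point $q=(X_0,Y_0,Z_0)$ with $X_0\neq0$, and solve $cx=X_0$, $Z=Z_0$ together with the quadric equation.

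At each point of the fibre I will compute the local degree as $\langle J\rangle$, where $J$ is the Jacobian determinant in local coordinates on source and target: $x$ or $z$ on the charts of $\pone$, as in \cite{Kass19}. Summing the local contributions should give $\langle u\rangle-\langle v\rangle$. The expected mechanism is that the two fibre points contribute $\langle \lambda u\rangle$ and $\langle -\lambda v\rangle$ for a common square class $\lambda$, which must then be shown to be trivial. Using $\langle -a\rangle=-\langle a\rangle+\langle 1,-1\rangle$, the sum $\langle u,-v\rangle$ agrees with $\langle u\rangle-\langle v\rangle$ modulo the hyperbolic form, and the rank-$0$ normalization fixes the remaining discrepancy. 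I will also confirm rank $0$ independently, by checking that complex realization gives a map of topological degree $0$.

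The main obstacle is the choice of chart and regular value. Points of $Q_2$ with $x=0$ lie on $z\in\{0,1\}$, so the naive fibre over $q$ may not be étale. The most likely failure is that the square-class factor $\lambda$ does not visibly cancel. If the direct count does not close, my fallback is to $\pone$-suspend via Lemma \ref{lem:ponesuspoddeven} to $F\colon Q_4\to Q_4$, which lies in the stable range, and compute its degree there. There $F$ differs from $f_{u,v}$ only by adjoining the coordinate $x_2$, so the local degrees at corresponding points coincide with those above.
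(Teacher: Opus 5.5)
There is a genuine gap, already at your first step. With $X=cx$ and $Z=z+\tfrac vu(1-z)$ you have $1-Z=\tfrac{u-v}{u}(1-z)$, and $Z(1-Z)$ is \emph{not} in the ideal $(x)\subset k[Q_2]$. At the point $(x,y,z)=(0,0,0)\in Q_2$ you get $X=0$ but $Z(1-Z)=v(u-v)/u^2\neq 0$ for $u\neq v$. So no cofactor $Y$ exists, and there is no morphism $Q_2\to Q_2$ with these $x$- and $z$-coordinates. This is also why your fibre over $(0,0,0)$ comes out empty. In the paper the pair is a unimodular row, namely the first row of $m_{u,v}$: on $Q_2$ one checks $\bigl(z+\tfrac vu(1-z)\bigr)\bigl(z+\tfrac uv(1-z)\bigr)-cx\cdot(u-v)y=1$. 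The map $f_{u,v}$ is then the composite $Q_2\to Q_3\xrightarrow{\eta}Q_2$, i.e.\ the map to $\pone$ defined by this pair. This factorization through $Q_3$ is exactly what forces rank $0$.

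Even for the correct map, counting a fibre does not compute the $\A^1$-Brouwer degree. $Q_2$ is a $2$-dimensional affine, non-proper model of $\pone$, and there is no non-constant morphism $\pone\to Q_2$ to restrict along. The map to $\pone$ has curves as fibres. For an endomorphism of $Q_2$, the local degree at an isolated fibre point lives in $[Q_2/(Q_2\smallsetminus p),Q_2/(Q_2\smallsetminus q)]=[(\pone)^{\wedge 2},(\pone)^{\wedge 2}]$, and no local-to-global formula links their sum to the class in $[\pone,\pone]_{\A^1}$. Without properness the count can even depend on the chosen value. Already over $\cplx$, a proper endomorphism of $Q_2(\cplx)\cong TS^2$ of degree $d$ has signed preimage count $d^2$, by naturality of $H^2\otimes H^2_c\to H^4_c$.

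Your own expectation shows the inconsistency. Two fibre points give a rank-$2$ form, e.g.\ $\langle u,-v\rangle=\langle u\rangle-\langle v\rangle+\langle 1,-1\rangle$. But $f_{u,v}$ becomes null after complex realization, since it factors through $Q_3(\cplx)\simeq S^3$. Dropping $\langle 1,-1\rangle$ by a ``rank-$0$ normalization'' is not a valid step. The fallback to $Q_4$ has the same defects: Lemma~\ref{lem:ponesuspoddeven} needs an honest presentation of $f_{u,v}$, and $Q_4$ is again affine and non-proper.

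What is missing is an additivity input in the unstable group $[Q_2,\pone]_{\A^1}$. The paper takes it from \cite{Barth25}. Write $f_v$ for the map given by $\langle vx_1,z\rangle$. Proposition 103 there shows that the map given by $\langle ux_1,z\rangle$ has stable class $\langle u\rangle$. Lemma 91 there shows that $f_{u,v}\oplus f_v$, the action of the rank-$0$ map $f_{u,v}$ on $f_v$, has class $\langle u\rangle$. Hence $[f_{u,v}]=\langle u\rangle-\langle v\rangle$.
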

\begin{proof}
     It follows from \cite[Proposition 103]{Barth25} that the stable class of the ideal $\langle ux_1,  z\rangle$ is $\langle u\rangle$, and from \cite[Lemma 91]{Barth25} that the class of $f_{u,v} \oplus f_v$ is $\langle u\rangle$. Hence the class of $f_{u,v}$ is $\langle u\rangle - \langle v\rangle $. 
\end{proof}
Since all $Q_2$-endomorphisms representing rank $0$ forms factor through $Q_3$ (\cite[Corollary 43]{Barth25}), the ideal described in Lemma \ref{lem:q2rank0maps} is in fact a unimodular row. Thus we define the $\mathrm{SL_2}(k[Q_2])$-matrix \[m_{u,v} := \begin{pmatrix}
    z + \frac{v}{u}(1-z) &  \frac{(u-v)}{uv}x_1 \\ (u-v)y_1  & z + \frac{u}{v}(1+z) 
\end{pmatrix}.\]
Consider polynomials $A_n(z), B_n(z)$ defined such that $(z + (1-z))^{2n} = z^n A_n(z) + (1-z)^n B_n(z)$. 
\begin{lem}
\label{lem:Q2hyperbolic}
    Let $n>0$ then: 
    \begin{enumerate}
        \item The map $Q_2 \to Q_2$ given by the ideal $\langle z^n A(n), x_1^nA_n(z)\rangle$ has degree 
    $\frac{n}{2}\langle 1,-1\rangle$ when $n$ is even and  $\frac{n-1}{2}\langle 1,-1\rangle + \langle1\rangle$ when $n$ is odd. 
    
    \item The map $Q_2 \to Q_2$ given by the ideal $\langle z^n A_n(z), y_1^nA_n(z)\rangle$ has degree 
    $\frac{n}{2}\langle 1,-1\rangle$ when $n$ is even and  $-\frac{n-1}{2}\langle 1,-1\rangle - \langle -1\rangle$ when $n$ is odd.
    \end{enumerate}
\end{lem}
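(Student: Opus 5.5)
The plan is to reduce both statements to the classical computation of the $\A^1$-Brouwer degree of the power map $w\mapsto w^n$ on $\pone$, using that $Q_2\simeq\pone$.

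\textbf{Making the map explicit.} Part (1) should mean the morphism
\[
(x_1,y_1,z)\mapsto (x_1^nA_n(z),\,\tilde f_1,\,z^nA_n(z)).
\]
Since $1=(z+(1-z))^{2n}=z^nA_n+(1-z)^nB_n$, we have $1-z^nA_n=(1-z)^nB_n$. On $Q_2$ we also have $z^n(1-z)^n=(x_1y_1)^n$. Hence
\[
f_z(1-f_z)=x_1^nA_n\cdot y_1^nB_n,
\]
so $\tilde f_1=y_1^nB_n(z)$ and the map is well defined over $\Z$. Similarly, the map in (2) is $(y_1^nA_n,\,x_1^nB_n,\,z^nA_n)$. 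This equals $F_1\circ\sigma$, where $F_1$ is the map of (1) and $\sigma\colon(x_1,y_1,z)\mapsto(y_1,x_1,z)$ is the coordinate swap on $Q_2$.

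\textbf{Part (1).} Consider the morphism $q\colon Q_2\to\pone$ given by $[x_1:z]=[1-z:y_1]$; the two charts glue because $x_1y_1=z(1-z)$. It is an $\A^1$-weak equivalence: the fibres over the two standard opens are affine lines, which is the usual identification $Q_2\simeq\mathbb{J}^1\simeq\pone$. Then
\[
q\circ F_1=[x_1^nA_n:z^nA_n]=[(1-z)^nB_n:y_1^nB_n],
\]
and on each chart this equals $[x_1^n:z^n]=[(1-z)^n:y_1^n]$. Indeed, $A_n$ and $B_n$ cannot vanish simultaneously, since $z^nA_n+(1-z)^nB_n=1$, so one of the two expressions is always defined. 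Thus $q\circ F_1=\rho_n\circ q$, where $\rho_n[a:b]=[a^n:b^n]$. The degree of $F_1$ is therefore the $\A^1$-degree of $\rho_n$. By Cazanave's computation (or Kass--Wickelgren's local-degree formula at the point $[1:1]$, whose preimages are the $n$-th roots of unity), this degree is $\frac n2\langle1,-1\rangle$ for $n$ even and $\frac{n-1}2\langle1,-1\rangle+\langle1\rangle$ for $n$ odd. Since the statement concerns stable classes, we may pass freely through this unstable identification.

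\textbf{Part (2) and the main obstacle.} The degree is multiplicative under composition, so $\deg(F_1\circ\sigma)=\deg(F_1)\cdot\deg(\sigma)$, and what remains is to compute $\deg\sigma$. I expect this to be the delicate step, both for the sign and for checking it against the stated formulas. The approach I would try first is to note that after a linear change of variables, $\sigma$ is a reflection of the quadratic form $x_1y_1+z^2-z$; a reflection has determinant $-1$ and should act on the motivic sphere with degree $-\langle-1\rangle$. I would cross-check this with a local-degree computation: at a rational point of the target with two rational preimages under a suitable auxiliary map, compare the Jacobian signs with and without $\sigma$.

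Granting $\deg\sigma=-\langle-1\rangle$, the odd case gives
\[
\Big(\tfrac{n-1}2\langle1,-1\rangle+\langle1\rangle\Big)\cdot\big(-\langle-1\rangle\big)=-\tfrac{n-1}2\langle1,-1\rangle-\langle-1\rangle,
\]
using $\langle1,-1\rangle\langle-1\rangle=\langle1,-1\rangle$. This is exactly the stated value. In the even case the same argument gives $-\frac n2\langle1,-1\rangle$, not $\frac n2\langle1,-1\rangle$ as stated. Since $\langle1,-1\rangle$ has infinite order in $\mathrm{GW}(k)$, the stated even formula for (2) does not follow from this argument. Either $\deg\sigma$ is not $-\langle-1\rangle$ after all, which the local-degree cross-check would reveal, or the even case of (2) in the statement needs correcting.
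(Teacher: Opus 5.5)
\textbf{Comparison of routes.} Your route differs from the paper's. The paper only invokes the argument of Theorem 105 in \cite{Barth25}: the maps are defined over $\Z$, so their classes are determined by the degrees of their complex and real realizations (rank and signature). The actual realization computations are left implicit. You instead conjugate by $q\colon Q_2\to\pone$, which identifies the map in (1) with the power map $\rho_n$, and then quote Cazanave's Bézoutian. For (2) you factor the map as $F_1\circ\sigma$. Part (1) is correct as written. It works over any field without needing a $\Z$-model, and it explains where the anti-diagonal form comes from.

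\textbf{The gap: the value of $\deg\sigma$.} You only ``grant'' $\deg\sigma=-\langle-1\rangle$, and the determinant heuristic does not settle it. The swap $s[a:b]=[b:a]$ on $\pone$ also has determinant $-1$, yet its degree is $\langle -1\rangle$. Had $\deg\sigma$ been $\langle-1\rangle$, the odd case of (2) would come out as $\frac{n-1}{2}\langle 1,-1\rangle+\langle -1\rangle$ instead. The gap can be closed with tools already in the paper:
\begin{itemize}
\item Let $\theta(x_1,y_1,z)=(x_1,y_1,1-z)$. Then $q\circ\theta\sigma=s\circ q$, since $[y_1:1-z]=[z:x_1]$ on $Q_2$. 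Hence $\deg(\theta\sigma)=\langle -1\rangle$.
\item The map $\theta$ swaps the contractible opens $X_0,X_1$ of Section~\ref{sec:basicsquadrics}, and it satisfies $p_1\circ\theta=p_1$ on $X_0\cap X_1\simeq\gm$. So on $Q_2\simeq S^1\wedge\gm$ it is the flip of the simplicial circle, which has degree $-1$.
\item Therefore $\deg\sigma=-\langle-1\rangle$.
\end{itemize}
Alternatively, apply the paper's own method to $\sigma$. Its complex realization is a reflection of $S^2$, of degree $-1$. Its real realization fixes pointwise the core circle $\{x_1=y_1\}$ of the one-sheeted hyperboloid $Q_2(\mathbb{R})$, so it has degree $+1$. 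Either way you get exactly the stated odd case.

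\textbf{The even case of (2).} Your objection is correct, and you do not need to hedge. Since $\langle 1,-1\rangle\cdot\beta=\mathrm{rk}(\beta)\,\langle 1,-1\rangle$ for every $\beta\in\mathrm{GW}(k)$, the even case only uses $\mathrm{rk}(\deg\sigma)=-1$. Equivalently, the complex realization of the map in (2) has degree $-n$. Its degree is therefore $-\frac n2\langle 1,-1\rangle$, and the stated $+\frac n2\langle1,-1\rangle$ is impossible because it has rank $+n$. Carrying out the paper's realization argument (complex degree $-n$, real degree $0$) gives the same corrected value. The correction is also consistent with the name $P_{-n}$ and with its use for negative even ranks in the procedure that follows the lemma.
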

\begin{proof}
    Since these morphisms are defined over $\Z$, one can use a similar argument as the proof of Theorem 105 in \cite{Barth25} to show that the class the morphisms are determined by the Brouwer degree of their real and complex realizations. 
\end{proof}
 A $Q_2$-endomorphism can be described as a $(2 \times 2)$ idempotent matrix $P$ with trace $1$ and entries in $k[Q_2]$. For $n \geq 0$, we define the idempotent matrices
 \[
 P_n = \begin{pmatrix}
     z^n A_n(z) & x_1^nA_n(z) \\ y_1^n B_n(z) & (1-z)^nB_n(z)
 \end{pmatrix} \quad P_{-n}= \begin{pmatrix}
     z^n A_n(z) & y_1^nA_n(z) \\ x_1^n B_n(z) & (1-z)^nB_n(z)
 \end{pmatrix}.\]
In loc.cit. an action of rank $0$ maps on rank $n$ maps is described, and it is shown that this agrees with the group operation on $[Q_2, \pone]^{\A^1}$. One can check, following Remark 84 loc.cit, that the action of $m_{u,v}$ on $P$ is given by the matrix $m_{u,v}Pm_{u,v}^{-1}$.

Given an element in $q \in \mathrm{GW}(k)$, one does the following to find a morphism $Q_2 \to Q_2 $ that stably represents $q$. 
\begin{enumerate}
    \item If $q$ is of even rank $n$, write it as a sum $\frac{n}{2}\langle 1, -1\rangle + \sum_{i=1}^r \langle u_i\rangle - \langle v_i\rangle$ for some units $u_i, v_i \in k^\times$. If $q$ is of odd rank $n$, write it as a sum $\frac{n-1}{2}\langle 1, -1\rangle +\langle 1\rangle  + \sum_{i=1}^r \langle u_i\rangle - \langle v_i\rangle$ for some units $u_i, v_i \in k^\times$.
    \item One then computes matrix product $P_q= m_{u_1,v_1}\ldots m_{u_r,v_r}P_n m_{u_r,v_r}^{-1}\ldots m_{u_1,v_1}^{-1}$
    \item The first column of $P_q$ is the ideal defining the map $Q_2 \to Q_2$ with stable class $q$. 
\end{enumerate}
\begin{rem}
    The reason we do not decompose $q$ as a sum $n\langle 1\rangle + \sum_{i=1}^m \langle u_i\rangle - \langle v_i\rangle$ is that the $Q_2$-endomorphism representing $n\langle 1\rangle $ is a lot more complicated to write down compared to the maps given in Lemma \ref{lem:Q2hyperbolic}.
\end{rem}
\subsection{Motivic Hopf maps}

Given an H-space $X$, the Hopf construction detailed in \cite[\S 7.3]{Morel08} provides us with a fiber sequence 
\begin{equation*}
    X \to X\star X \to \Sigma X. 
\end{equation*}
Applying this to the algebraic group $\gm$ yields the fiber sequence
\begin{equation*}
    \gm{} \to Q_3 \to Q_2.
\end{equation*}
The map $\eta \colon Q_3 \to Q_2$ can be described as the rank $1$ projective $k[Q_3]$-module 
\begin{equation*}
    \begin{pmatrix}
        x_1y_1 & x_2y_1 \\ x_1y_2 & x_2y_2
    \end{pmatrix}.
\end{equation*}

\begin{prop}
\label{prop:etasusp}
Let $n\geq 1$. The map $f\colon Q_{2n+1} \to Q_{2n}$ defined by the ideal $(x_3, \ldots, x_n, x_1x_2)$ is the $(n-1)$-fold $\mathbb{P}^1$-suspension of the Hopf map $\eta \colon Q_3 \to Q_2.$

\end{prop}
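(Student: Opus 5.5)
Here $\eta\colon Q_3\to Q_2$ is the idempotent $\begin{pmatrix} x_1y_1 & x_2y_1\\ x_1y_2 & x_2y_2\end{pmatrix}$, and I read the statement's ideal $(x_3,\ldots,x_n,x_1x_2)$ as shorthand for the ideal $\langle x_1x_2, x_3,\ldots,x_{n+1}, f_z\rangle$. This corresponds to the description in Lemma \ref{lem:ponesuspoddeven}, where only the ``$x$''-coordinates of the target and the coordinate $f_z$ are recorded.

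The plan is an induction on $n$, using Lemma \ref{lem:ponesuspoddeven} to pass from $n$ to $n+1$.

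\textbf{Base case $n=1$.} We identify $\eta$ with a map $Q_3\to Q_2$ in the coordinate form used throughout the paper. Recall that a rank one idempotent $\begin{pmatrix} a & b\\ c& d\end{pmatrix}$ of trace $1$ corresponds to the point $(b,c,a)\in Q_2$, since $bc=a(1-a)$; this matches the conventions $P_n$ of the previous subsection and the map $\pi_1\colon \mathbb{J}^1\to Q_2$. Under this identification $\eta$ becomes
\[
(x_1,x_2,y_1,y_2)\mapsto (x_2y_1,\, x_1y_2,\, x_1y_1).
\]
Indeed, $x_2y_1x_1y_2=x_1y_1(1-x_1y_1)$ on $Q_3$. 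The ideal recorded by Lemma \ref{lem:ponesuspoddeven} is then $\langle x_2y_1, x_1y_1\rangle$. To reach the stated form $\langle x_1x_2,\ldots\rangle$, I will precompose with an automorphism of $Q_3$. A natural choice is $(x_1,x_2,y_1,y_2)\mapsto(x_1,y_2,y_1,x_2)$, which preserves $x_1y_1+x_2y_2$. I expect it to be $\A^1$-homotopic to the identity: it corresponds to transposition-type symmetries of $\mathrm{SL}_2$, and one can check it has degree $\langle 1\rangle$ via Lemma~2.5-style matrix computations. If instead it has degree $\langle -1\rangle$, I will compensate by a suitable sign change $x_2\mapsto -x_2,\ y_2\mapsto -y_2$. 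After this, the $n=1$ statement is exactly the Hopf map, possibly up to an explicit automorphism of $Q_2$ of degree $\langle 1\rangle$ such as swapping $z\leftrightarrow 1-z$ composed with swapping $x\leftrightarrow y$. These normalizations are the main bookkeeping point.

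\textbf{Inductive step.} Suppose $f_n\colon Q_{2n+1}\to Q_{2n}$ is given by the ideal $\langle x_1x_2, x_3,\ldots,x_{n+1}, f_z\rangle$ (with its complementary entries $\tilde f_i$). Lemma \ref{lem:ponesuspoddeven}, applied with source $Q_{2n+1}$ and target $Q_{2n}$, gives $\Sigma_{\pone}f_n\colon Q_{2n+3}\to Q_{2n+2}$. Its ideal is obtained by appending the new coordinate $x_{n+2}$ and keeping $f_z$ unchanged. This is precisely the map in the statement for $n+1$. Moreover, the explicit formula in Section \ref{sec:pone-suspension} shows that the remaining $\tilde f$-coordinates are $\tilde f_i$ together with $y_{n+2}r$, so the map is fully determined and well defined.

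\textbf{Main obstacle.} The hard part is confirming that the construction of Section \ref{sec:pone-suspension} genuinely represents $\Sigma_{\pone}$ in $\mathcal{H}(k)$, compatibly with the weak equivalences $X_{2n+1}\simeq Q_{2n+1}$ and $Y_{2m+2}\simeq Q_{2m+2}$. I take this as given from Lemma \ref{lem:ponesuspoddeven}. What remains is the base-point and automorphism normalization in the base case. For that, I would first try to check directly that the coordinate swap on $Q_3$ is an elementary matrix homotopy in $\mathrm{SL}_2$.
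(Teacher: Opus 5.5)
Your inductive step is the paper's: Lemma \ref{lem:ponesuspoddeven} appends $x_{n+1}$ and changes nothing else. The gap is in the base case, in the normalization by $\sigma(x_1,x_2,y_1,y_2)=(x_1,y_2,y_1,x_2)$. This automorphism is \emph{not} $\A^1$-homotopic to the identity.

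Identify $Q_3=\mathrm{SL}_2$ via $g=\begin{pmatrix} x_1 & x_2\\ -y_2 & y_1\end{pmatrix}$. Then $\sigma(g)=Dg^{T}D^{-1}$ with $D=\mathrm{diag}(1,-1)$.
\begin{itemize}
\item Transposition is homotopic to inversion, since $g^{-T}=HgH^{-1}$ with $H$ a product of elementary matrices. Inversion has degree $-\langle 1\rangle$.
\item Conjugation by $D$ is exactly your sign change $x_2\mapsto -x_2$, $y_2\mapsto -y_2$, which has degree $\langle -1\rangle$.
\end{itemize}
So $\deg\sigma=-\langle -1\rangle$, which has rank $-1$. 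You can also see this directly: on the sphere $\{(u_1,u_2,\bar u_1,\bar u_2)\}\subset Q_3(\cplx)$, $\sigma$ is the reflection $(u_1,u_2)\mapsto(u_1,\bar u_2)$.

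Your fallback cannot repair a rank $-1$ discrepancy; composing with the sign change just turns $\sigma$ into transposition. Hence $\eta\circ\sigma$, with or without the correction, complex-realizes to $-\eta_{\mathrm{top}}\neq\eta_{\mathrm{top}}$ in $\pi_3(S^2)\cong\Z$. Your base case would therefore identify with $\eta$ a map that is not $\eta$. After one $\pone$-suspension the error becomes harmless, because $[Q_5,Q_4]_{\A^1}=\KMW{-1}(k)$, where $\langle -1\rangle$ acts by $-1$. But your induction is anchored at $n=1$.

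The normalization also misses its own target. One computes $\eta\circ\sigma=(y_1y_2,\,x_1x_2,\,x_1y_1)$, so $x_1x_2$ lands in the $y$-slot and the recorded ideal is $\langle y_1y_2,x_1y_1\rangle$.

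The paper does no reparametrization. Its $n=1$ map is the idempotent map $(x_2y_1,x_1y_2,x_1y_1)$ itself, with ideal $\langle x_2y_1,x_1y_1\rangle=\langle y_1\rangle$. The base case then consists of exactly the step you take as a definition: checking that this map is the Hopf map. One uses two weak equivalences:
\begin{itemize}
\item $Q_3\to\A^2\smallsetminus\{0\}$, $(x_1,x_2,y_1,y_2)\mapsto(x_1,x_2)$;
\item $Q_2\to\pone$, $(x_1,y_1,z)\mapsto[z:x_1]=[y_1:1-z]$.
\end{itemize}
The square with the classical Hopf map $(x_1,x_2)\mapsto[x_1:x_2]$ commutes, because $[x_1y_1:x_2y_1]=[x_1:x_2]=[x_1y_2:x_2y_2]$ wherever defined. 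Drop the normalization, do this check instead, and your induction via Lemma \ref{lem:ponesuspoddeven} completes the proof.
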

\begin{proof}
We first need to prove that the claimed map is the Hopf map when $n=1$. Recall that we have a homotopy equivalence $\pi \colon Q_3 \to \A^2 \smallsetminus\{0\}$ sending $(x_1,x_2,y_1,y_2)$ to the point $(x_1,x_2)$ in $\A^2 \smallsetminus\{0\}$. There is also a homotopy equivalence $Q_2 \to \pone$ which maps a point $(x_1,y_1,z) \in Q_2$ to the point $[z:x_1]$ or $[y_1 : 1-z]$ (whichever is nonzero). The Hopf map $\eta\colon \A^2 \smallsetminus\{0\} \to \pone$ is defined by $(x_1,x_2) \to [x_1:x_2]$. It is straightforward to check that the square 
\begin{equation*}
    \xymatrix{Q_3\ar[r]^\eta\ar[d]^\pi & Q_2\ar[d]^\pi \\
\A^2 \smallsetminus\{0\} \ar[r]_{\eta} & \mathbb{P}^1.}
\end{equation*}
commutes. The claim then follows from Lemma \ref{lem:ponesuspoddeven}.

\end{proof}
Using the Hopf construction for the algebraic group $\mathrm{SL}_2$ yields the fiber sequence
\begin{equation*}
    \mathrm{SL_2} \to Q_7 \xrightarrow{\nu} Q_4. 
\end{equation*}
As explained in \cite[\S3]{Fasel16a}, the scheme $Q_7$ can be seen of the scheme of pairs of $(2\times 2)$-matrices $(M_1,M_2)$ with $\det M_1 + \det M_2 = 1$. The map $\nu\colon Q_7 \to Q_4$ is given by $(M_1 M_2, \det(M_1))$, and its real realization is actually the Hopf map by \cite[Proposition 3.0.1]{Fasel16a}. Note that the complex realization of $\nu$ is the second Hopf map by \cite{Gluck86}. Using once again Lemma \ref{lem:ponesuspoddeven}, we obtain the following proposition.

\begin{prop}\label{prop:nususp}
Let $n\geq 2$. The map $f\colon Q_{2n+3} \to Q_{2n}$ defined by the ideal 
\begin{equation*}
    (-x_3y_1-x_2y_4,-x_4y_1+x_2y_3, x_5, \ldots, x_{n+2}, x_3y_3 + x_4y_4)
\end{equation*} 
is the $(n-2)$-fold $\mathbb{P}^1$-suspension of the Hopf map $\nu \colon Q_7 \to Q_4$.
\end{prop}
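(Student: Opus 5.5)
The plan mirrors the proof of Proposition \ref{prop:etasusp}: first identify the base case $n=2$ with the map $\nu$ described above, then obtain the general case by iterating Lemma \ref{lem:ponesuspoddeven}.

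\textbf{Base case $n=2$.} Write a point of $Q_7$ as a pair of $2\times 2$ matrices
\[
M_1=\begin{pmatrix} x_1 & x_2\\ -y_2 & y_1\end{pmatrix},\qquad M_2=\begin{pmatrix} x_3 & x_4 \\ -y_4 & y_3\end{pmatrix},
\]
up to a sign and coordinate convention fixed so that $\det M_1+\det M_2=\sum x_iy_i=1$. The model of $\nu$ from \cite[\S3]{Fasel16a} sends $(M_1,M_2)$ to $(M_1M_2,\det M_1)$. Read through the identification of $Q_4$ with pairs $(B,w)$ satisfying $\det B=w(1-w)$, this means the coordinates $x_1,x_2$ of $Q_4$ should be suitable entries of a twisted product such as $M_1\bar M_2$ (with $\bar{\ }$ the adjugate), and $z$ should be $\det M_2=x_3y_3+x_4y_4$ (or $\det M_1=1-z$, which is equivalent under the symmetry $z\mapsto 1-z$). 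I would fix the convention so that the first row of this product is $(-x_3y_1-x_2y_4,\,-x_4y_1+x_2y_3)$. Then I would check directly that the ideal $(-x_3y_1-x_2y_4,-x_4y_1+x_2y_3,x_3y_3+x_4y_4)$ contains $z(1-z)$ modulo $\sum x_iy_i-1$. This follows from $\det(M_1\bar M_2)=\det M_1\det M_2$ together with the remaining entries supplying the $y$-coordinates. So the ideal defines a morphism $Q_7\to Q_4$ that agrees with $\nu$, or at worst differs from it by composition with automorphisms of $Q_7$ and $Q_4$ (coordinate permutations, the transpose/adjugate, $z\mapsto 1-z$). The proposition is stated only up to homotopy, so I must argue that these automorphisms do not change the class. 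They are defined over $\Z$ and have degree $\pm\langle 1\rangle$, and their signs can be arranged to cancel. Alternatively, each twist can be absorbed by precomposing with an automorphism of $\mathrm{SL}_2\times\mathrm{SL}_2$-type that is $\A^1$-homotopic to the identity, via elementary matrices.

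\textbf{Induction on $n$.} The map $Q_7\to Q_4$ has odd source and even target, with $2$-dimensional ``$x$-part'' in the target. Lemma \ref{lem:ponesuspoddeven} says that $\pone$-suspending a map given by the ideal $\langle f_1,\ldots,f_m,f_z\rangle$ yields the map $Q_{2n+5}\to Q_{2n+2}$ given by $\langle f_1,\ldots,f_m,x_{\text{new}},f_z\rangle$, where $x_{\text{new}}$ is the new source coordinate. Here the source $Q_{2n+3}$ has coordinates $x_1,\ldots,x_{n+2}$, so the new variable is $x_{n+3}$. Applying the lemma $(n-2)$ times appends $x_5,\ldots,x_{n+2}$ between the two product entries and $f_z=x_3y_3+x_4y_4$, which is exactly the stated ideal. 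Each application raises source and target indices by $2$, giving $Q_{2n+3}\to Q_{2n}$, and the induction closes.

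\textbf{Main obstacle.} The hard step is the base case bookkeeping: matching the paper's coordinates on $Q_7$ and $Q_4$ with the matrix description in \cite{Fasel16a} so that the displayed ideal really is $\nu$, and not $\nu$ twisted by a sign or an automorphism that could change the class, for instance to $-\nu$ or $\langle -1\rangle\nu$. I would first try a direct substitution check. If a discrepancy appears, I would show that the discrepancy is an automorphism $\A^1$-homotopic to the identity, for example a product of elementary matrices acting on $M_2$.
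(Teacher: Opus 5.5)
Your proposal follows the paper's route: the paper takes the matrix model $\nu(M_1,M_2)=(M_1M_2,\det M_1)$ from \cite{Fasel16a} as the case $n=2$ and then applies Lemma \ref{lem:ponesuspoddeven} $(n-2)$ times, exactly as in your induction, and your bookkeeping of the appended variables $x_5,\ldots,x_{n+2}$ is right. The base case is settled by choosing the identifications, e.g.\ $M_1=\bigl(\begin{smallmatrix} x_3 & -y_4\\ x_4 & y_3\end{smallmatrix}\bigr)$, $M_2=\bigl(\begin{smallmatrix} -y_1 & -y_2\\ x_2 & -x_1\end{smallmatrix}\bigr)$ and $B=\bigl(\begin{smallmatrix} x_1 & -y_2\\ x_2 & y_1\end{smallmatrix}\bigr)$ on $Q_4$, which make $(M_1M_2,\det M_1)$ literally the displayed map; you should not rely on your fallback that twists such as $z\mapsto 1-z$ are harmless or cancel, because that involution swaps the two $\A^1$-contractible opens $X_0,X_1$ of $Q_4$ while commuting with $p_2$, hence acts as $-1\in\mathrm{GW}(k)=[Q_4,Q_4]_{\A^1}$ and is not homotopic to the identity.
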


Complex realization of the maps from Proposition \ref{prop:etasusp} gives us complex algebraic representatives of the maps $\Sigma^{2n}\eta$ in the category of topological spaces. Similarly, the real realization of the maps from Proposition \ref{prop:nususp} gives us real algebraic representatives of the maps $\Sigma^n\eta$. Our two families of maps do not allow us to create a map whose complex realization is $\Sigma \eta$. Such a map would need to be a map $Q_4 \to Q_3$, and we construct such a map in the next section. 

\section{Exotic Hopf maps}

\subsection{First construction: Symplectic $K$-theory}\label{sec:symplecticHopfmap}
In this section, we construct a map $Q_4 \to Q_3$ that (complex) realizes to the suspension of the Hopf map.  We first note that we have an exact sequence of sheaves \cite[Theorem 3]{Asok12a} 
\begin{equation}\label{eqn:pi2a2minus0}
\KMW4/12_\epsilon\to \piaone_{2}(Q_3)\to \mathbf{GW}_3^2\to 0,
\end{equation}
where $\mathbf{GW}_3^2$ is the Nisnevich sheafification of the presheaf $X\mapsto \mathrm{GW}_3^2(X)=\mathrm{KSp}_3(X)$. Contracting twice and using \cite[Proposition 2.9]{Asok12b}, \cite[Proposition 4.4]{Asok12a} we obtain an exact sequence
\[
\KMW2/12_\epsilon\to \piaone_{2,2}(Q_3)\to \mathbf{GW}_1^0\to 0.
\]
For any field $F/k$
\[
\mathbf{GW}_1^0(F)=\mathrm{KO}_1(F)=F^\times/(F^{\times})^2\times \Z/2\Z
\]
with generator for the $\Z/2\Z$ factor given by the permutation matrix $\begin{pmatrix}0 & 1 \\ 1 & 0\end{pmatrix}$.

On the other hand, $\mathbf{GW}_3^2=\piaone_2(\mathrm{Sp})$ and the right-hand map in \eqref{eqn:pi2a2minus0} is induced by the inclusion $Q_3\to \mathrm{Sp}$ (recall that $Q_3 = \mathrm{SL}_2 = \mathrm{Sp}_2$). Any map $Q_4\to Q_3$ can be composed with this inclusion, and conversely using the usual fiber sequences and connectivity arguments (e.g. \cite[Corollary 2.4]{Asok12a}), we see that any map $Q_4\to Q_3$ comes from a map $Q_4\to \mathrm{Sp}$ (non necessarily unique if $k$ is not algebraically closed). This leads us to consider 
\[
[Q_4,\mathrm{Sp}]_{\aone}=\mathrm{KSp}_1(Q_4)=\mathrm{GW}^2_1(Q_4).
\]
We can use the coniveau spectral sequence \cite[Definition 26]{Fasel09c} to compute this group, and we see that (using the fact that the cohomology groups of $Q_4$ with coefficients in strictly $\A^1$-invariant sheaves is easy to compute, see e.g. \cite[Proposition 1.1.5]{Asok16}) this spectral sequence yields a presentation of the form
\[
\mathrm{H}^0(Q_4,\mathbf{GW}_2^2)\to \mathrm{H}^2(Q_4,\mathbf{GW}_3^2)\to \mathrm{GW}_1^2(Q_4)\to 0
\] 
which reduces to 
\[
\KMW2(k)=\mathbf{GW}_2^2(k)\to \mathbf{GW}_1^0(k)\to \mathrm{GW}_1^2(Q_4)\to 0
\] 
If $k$ is algebraically closed, the left-hand term is isomorphic to the uniquely divisible group $\mathbf{K}^{\mathrm{M}}_2(k)$, while the group $ \mathbf{GW}_1^0(k)=\Z/2\Z$ showing that $\mathrm{GW}_1^2(Q_4)=\Z/2\Z$. Since $\Z/2\Z$ is invariant under field extensions, this shows that over any field the direct factor $\Z/2\Z$ of $\mathbf{GW}_1^0(k)$ survives in $\mathrm{GW}_1^2(Q_4)$.

To describe the image of $\Z/2\Z$ under the map $\mathbf{GW}_1^0(k)\to \mathrm{GW}_1^2(Q_4)$, we draw inspiration from \cite[Section 15.2]{Fasel08a}. The idempotent matrix
\[
M:=\begin{pmatrix} z & 0 & x_2 & -x_1 \\ 0 & z & y_1 & y_2 \\
y_2 & x_1 & 1-z & 0 \\ -y_1 & x_2 & 0 & 1-z\end{pmatrix}
\]
defined in Section \ref{sec:universal2bundle} above yields a rank $2$ projective module $P:=\mathrm{Im}(M)\subset k[Q_4]^4$, and we define a section $s\colon P\to k[Q_4]$ by considering the restriction to $P$ of the projection on the first factor $k[Q_4]^4\to k[Q_4]$. The image of $s$ is generated by the first row of $M$, and is thus the ideal $\mathfrak p:=\langle x_1,x_2,z\rangle$, which is prime of height $2$ with residue field $k(\mathfrak p)=k(y_1,y_2)$.  We have a commutative diagram
\[
\xymatrix{0\ar[r] & k[Q_4]\ar[r]\ar[d]_{-1} & P\ar[r]^-s\ar[d]^-\psi & k[Q_4]\ar[r]\ar@{=}[d] & k[Q_4]/\mathfrak p\ar[r]\ar[d]^-\varphi & 0 \\
0\ar[r] & k[Q_4]\ar[r] & P^\vee\ar[r]^-s & k[Q_4]\ar[r] & \mathrm{Ext}^2_{k[Q_4]}(k[Q_4]/\mathfrak p,k[Q_4])\ar[r] & 0}
\]
in which $\psi$ is the symplectic form induced by $(k[Q_4]^4,H^2)$ on $P$ and $\varphi$ is the symmetric form defined by mapping $1\in k[Q_4]/\mathfrak p$ to the top exact sequence (seen as an element in $ \mathrm{Ext}^2_{k[Q_4]}(k[Q_4]/\mathfrak p,k[Q_4])$). This symmetric form is actually a generator of $\widetilde{\mathrm{CH}}^2(Q_4)$ as shown in \cite[Lemma 4.2.6]{Asok14}.  The permutation matrix $\begin{pmatrix} 0 & 1 \\ 1 & 0\end{pmatrix}$ is orthogonal on $((k[Q_4]/\mathfrak p)^2,\varphi^{\oplus 2})$ and it is straightforward to check that it induces a commutative diagram on the direct sum of the above sequence with itself, corresponding to the permutation of the two factor of $P\oplus P$. This is obviously a symplectic transformation of $(P\oplus P,\psi\oplus\psi)$ and we can use it to define a symplectic matrix $U$ on $k[Q_4]^8$ by the commutative diagram
\[
\xymatrix{k[Q_4]^4\oplus k[Q_4]^4\ar[r]\ar[d]^-U & (P\oplus Q)\oplus (P\oplus Q)\ar[r] & (P\oplus P)\oplus (Q\oplus Q)\ar[d]^-{\tiny{\begin{pmatrix} 0 & 1 & 0 & 0 \\ 1 & 0 & 0 & 0 \\ 0 & 0 & 1 & 0\\ 0 & 0 & 0 & 1\end{pmatrix}}} \\
k[Q_4]^4\oplus k[Q_4]^4\ar[r] & (P\oplus Q)\oplus (P\oplus Q)\ar[r] & (P\oplus P)\oplus (Q\oplus Q)} 
\]
in which $Q$ is the orthogonal of $P$, i.e. the image of the idempotent matrix $N$ above.
To write $U$ explicitly, we observe that the isomorphism $k[Q_4]^4\to P\oplus Q$ is given by $x\mapsto (M(x),N(x))$, with inverse $P\oplus Q\to k[Q_4]^4$ induced by the respective inclusions. Thus
\[
U=\begin{pmatrix} 1-z & 0 & -x_2 & x_1 & z & 0 & x_2 & -x_1 \\ 0 & 1-z & -y_1 & -y_2 & 0 & z & y_1 & y_2 \\
-y_2 & -x_1 & z & 0 & y_2 & x_1 & 1-z & 0 \\ y_1 & -x_2 & 0 & z & -y_1 & x_2 & 0 & 1-z \\
z & 0 & x_2 & -x_1 & 1-z & 0 & -x_2 & x_1 \\ 0 & z & y_1 & y_2 & 0 & 1-z & -y_1 & -y_2 \\
y_2 & x_1 & 1-z & 0 & -y_2 & -x_1 & z & 0 \\ -y_1 & x_2 & 0 & 1-z & y_1 & -x_2 & 0 & z\end{pmatrix}.
\]

We will now reduce the matrix $U\in\mathrm{Sp}_8$ from the previous section into an element in $\mathrm{Sp}_2$ by using elementary symplectic operations. To make the following computations easier, we will need to switch to a different symplectic basis. Let $I_n$ be the $(n \times n)$-identity matrix, we define $J_n := \begin{pmatrix}
    0 & I_n \\ -I_n & 0
\end{pmatrix}$. Setting 
 \begin{equation*}
     S := \begin{pmatrix}
    1& 0& 0& 0& 0& 0& 0& 0\\ 0& 0& 1& 0& 0& 0& 0& 0\\ 0& 0& 0& 0& 1& 0& 0& 0\\ 0& 0& 0& 0& 0& 0& 1& 0\\ 0&
      1& 0& 0& 0& 0& 0& 0\\ 0& 0& 0& 1& 0& 0& 0& 0\\ 0& 0& 0& 0& 0& 1& 0& 0\\ 0& 0& 0& 0& 0& 0& 0& 1
\end{pmatrix},
\end{equation*}
we obtain 
$S \cdot \mathrm{diag(H,H,H,H)} \cdot S^T = J_4$, allowing us to change between the two symplectic bases. 

The elementary symplectic matrices with respect to $J_n$ come in two types. If $E$ is an $(n\times n)$-elementary matrix and $C$ is a \emph{symmetric} $(n\times n)$-matrix. The block matrices 
\begin{equation*}
    \begin{pmatrix}
        E & 0 \\ 0 & E^{-T}
    \end{pmatrix} \quad \text{and} \quad \begin{pmatrix}
        I_n & C \\ 0  & I_n
    \end{pmatrix}
\end{equation*} 
and their transposes are elementary symplectic matrices. On the other hand, the embedding $\mathrm{Sp}_{2n}\to \mathrm{Sp}_{2n+2}$ with respect to the inclusion $J_n \to J_{n+1}$ can be described as follows. The inclusion of the matrix $\begin{pmatrix}
    A & B \\ C  & D
\end{pmatrix} \in Sp_{2n}$ into $Sp_{2n+2}$ is the matrix 
\begin{equation*}
    \begin{pmatrix}
    A & 0_{n \times 1} & B  & 0_{n \times 1} \\ 0_{1 \times n} & 1 & 0_{1 \times n} & 0 \\ C & 0_{n \times 1}  & D & 0_{n \times 1} \\ 0_{1 \times n} & 0 & 0_{1 \times n} & 1
\end{pmatrix}.
\end{equation*}

We will now start with the symplectic reduction of the matrix
\begin{equation*}
    U=\begin{pmatrix} 1-z & 0 & -x_2 & x_1 & z & 0 & x_2 & -x_1 \\0 & 1-z & -y_1 & -y_2 & 0 & z & y_1 & y_2 \\
-y_2 & -x_1 & z & 0 & y_2 & x_1 & 1-z & 0 \\ y_1 & -x_2 & 0 & z & -y_1 & x_2 & 0 & 1-z \\
z & 0 & x_2 & -x_1 & 1-z & 0 & -x_2 & x_1 \\0 & z & y_1 & y_2 & 0 & 1-z & -y_1 & -y_2 \\
y_2 & x_1 & 1-z & 0 & -y_2 & -x_1 & z & 0 \\ -y_1 & x_2 & 0 & 1-z & y_1 & -x_2 & 0 & z\end{pmatrix}.
\end{equation*}
Performing the above change of basis, we see that $U$ becomes symplectic with respect to $J_4$ by conjugating with $S$:

\begin{equation*}
    M_1 := SUS^T = \begin{pmatrix}
        
    -z+1& -x_2& z& x_2& 0& x_1& 0& -x_1\\ -y_2& z& y_2& -z+1& -x_1& 0& x_1& 0\\ z& x_2& -z+1& -x_2& 0& -x_1& 0& x_1\\ y_2& -z+1& -y_2& z& x_1& 0& -x_1& 0\\ 0& -y_1& 0& y_1& -z+1& -y_2& z& y_2\\ y_1& 0& -y_1& 0& -x_2& z& x_2& -z+1\\ 0& y_1& 0& -y_1& z& y_2& -z+1& -y_2\\ -y_1& 0& y_1& 0& x_2& -z+1& -x_2& z \end{pmatrix}.
\end{equation*}
Setting
\begin{equation*}
    E_1 := \begin{pmatrix}
        1&0&0&0\\0&1&0&1\\0&0&1&0\\0&0&0&1
    \end{pmatrix}, \quad E_2 := \begin{pmatrix}
        1& 0& 0& 0\\ 0& 1& 0& 0\\ 0& 0& 1& 0\\ -y_2& z-1& y_2& 1
    \end{pmatrix},
\end{equation*}
the product 
\begin{equation*}
    \begin{pmatrix}
        E_1 & 0 \\ 0 & E_1^{-T}
    \end{pmatrix}^{-1}M\begin{pmatrix}
        E_1 & 0 \\ 0 & E_1^{-T}
    \end{pmatrix} \begin{pmatrix}
        E_2 & 0 \\ 0 & E_2^{-T} 
    \end{pmatrix}
\end{equation*}
yields the matrix 
\begin{equation*}
    \begin{pmatrix}
        -z+1& -x_2& z& 0& 0& 2x_1& 0& -2x_1z+x_1\\ -2y_2& 2z-1& 2y_2& 0& -2x_1& 0& 2x_1& -4x_1y_2\\ z& x_2& -z+1& 0& 0& -2x_1& 0& 2x_1z-x_1\\ 0& 0& 0& 1& x_1& 0& -x_1&2x_1y_2\\ 0& -y_1& 0& 0& -z+1& -2y_2& z& 0\\ y_1& 0& -y_1& 0& -x_2& 2z-1& x_2& -2x_2y_2-2z^2+2z\\ 0& y_1& 0& 0& z& 2y_2& -z+1& 0\\ 0& 0& 0& 0& 0& 0& 0& 1

.
    \end{pmatrix}.
\end{equation*}
multiplying from the right by $\begin{pmatrix}
    I_4 & S_1 \\ 0 & I_4
\end{pmatrix}$, where $S_4$ is the symmetric matrix 
\begin{equation*}
    S_4 = \begin{pmatrix}
        0&0&0&-x_1\\0&0&0&0\\0&0&0&x_1\\-x_1&0&x_1&-2x_1y_2
    \end{pmatrix}
\end{equation*}
we obtain 
\begin{equation*}
    \begin{pmatrix}
        -z+1& -x_2& z& 0& 0& 2x_1& 0& 0\\ -2y_2& 2z-1& 2y_2& 0& -2x_1& 0& 2x_1& 0\\ z& x_2& -z+1& 0& 0& -2x_1& 0& 0\\ 0& 0& 0& 1& 0& 0& 0& 0\\ 0& -y_1& 0& 0& -z+1& -2y_2&z& 0\\ y_1& 0& -y_1& 0& -x_2& 2z-1& x_2& 0\\ 0& y_1& 0& 0& z& 2y_2& -z+1& 0\\ 0& 0& 0& 0& 0& 0& 0& 1        
    \end{pmatrix},
\end{equation*}
from which we extract the $\mathrm{Sp}_6$ matrix 
\begin{equation*}
    M_2 := \begin{pmatrix}
        -z+1& -x_2& z& 0& 2x_1& 0\\ -2y_2& 2z-1& 2y_2& -2x_1& 0& 2x_1\\ z& x_2& -z+1& 0& -2x_1& 0\\ 0& -y_1& 0& -z+1& -2y_2& z\\ y_1& 0& -y_1& -x_2& 2z-1& x_2\\ 0& y_1& 0& z&2y_2& -z+1
    \end{pmatrix}.
\end{equation*}
Repeating the same procedure on $M_2$ with the elementary matrices 
\begin{equation*}
    E_3 := \begin{pmatrix}
        1 & 0 & 1 \\ 0 & 1 &0 \\ 0 & 0 & 1
    \end{pmatrix}, \quad E_4 := \begin{pmatrix}
        1 & 0 & 0 \\ 0 & 1 & 0 \\ -z & -x_2 & 1
    \end{pmatrix}
\end{equation*}
and the symmetric matrix
\begin{equation*}
   S_2 :=  \begin{pmatrix}
        0& 0& 0\\ 0& 0& 2x_1\\ 0& 2x_1& 2x_1x_2
    \end{pmatrix}
\end{equation*}
yields the $\mathrm{Sp}_4$ matrix
\begin{equation*}
    M_3 := \begin{pmatrix}
    -2z+1& -2x_2& 0& 4x_1\\ -2y_2& 2z-1& -4x_1& 0\\ 0& -y_1& -2z+1& -2y_2\\ y_1& 0& -2x_2& 2z-1
\end{pmatrix}.
\end{equation*}

Consider next the elementary matrices 
\begin{equation*}
    E_5 := \begin{pmatrix}
        1 & 2x_2 \\0 & 1 
    \end{pmatrix}, \quad E_6 := \begin{pmatrix} 1 & 0 \\ -y_1 & 1
    \end{pmatrix} 
\end{equation*}
and the symmetric matrices 
\begin{equation*}
    S_3 := \begin{pmatrix}
        0 & 0 \\ 0 & -1 
    \end{pmatrix}, \quad S_4 := \begin{pmatrix}
        0 & 0 \\ 0 & 1 - (2z-1).
    \end{pmatrix}
\end{equation*}
The product
\begin{equation*} M_4 := 
\begin{pmatrix} 
        E_5 & 0 \\ 0 & E_5^{-T}
    \end{pmatrix}
    \begin{pmatrix}
        I_2 & 0 \\ S_3 & I_2 
    \end{pmatrix} M_3 \begin{pmatrix}
        I_2 & S_4 \\ 0 & I_2 
    \end{pmatrix} \begin{pmatrix}
        E_6 & 0 \\ 0 & E_6^{-T}
    \end{pmatrix} 
\end{equation*}
yields the matrix 
\begin{equation*}
    M_4 = \begin{pmatrix}
       -4x_2y_1z+4x_2y_1-4x_2y_2-2z+1& 4x_2z-4x_2& -8x_1x_2& 8x_2^2y_2+8x_2z+4x_1-8x_2\\ -2y_1z+y_1-2y_2& 2z-1& -4x_1& 4x_2y_2+2z-2\\ y_1^2& -y_1& -2z+1& -y_1-2y_2\\-2x_2y_1^2+2y_1z+2y_2& 2x_2y_1-2z+1& 4x_2z+4x_1-4x_2& 1
    \end{pmatrix}.
\end{equation*}
Using the $1$ in the bottom right corner, we aim to use elementary transformation to remove the entries $-y_1 - 2y_2$ and $4x_2z + 4x_1 -4x_2$. Consider next the elementary matrices 
\begin{equation*}
    E_7 := \begin{pmatrix}
        1  & 2x_2 \\ 0 & 1 
    \end{pmatrix}, \quad E_8 := \begin{pmatrix}
        1 & 0 \\ -y_1 & 1
    \end{pmatrix}
\end{equation*}
we set 
\begin{equation*}
    N := \begin{pmatrix}
        E_7 & 0 \\ 0 & E_7^{-T}
    \end{pmatrix}
    M_4\begin{pmatrix}
        E_8& 0 \\ 0 & E_8^{-T}
    \end{pmatrix} .
\end{equation*}
The matrix $N$ is too big to write down explicitly here, but it can be written as the matrix
\begin{equation*}
    \begin{pmatrix}
        & & & N_{14} \\
        & N'& & N_{24} \\
        & & & 0 \\
        N_{41} & N_{42} & 0 & 1
    \end{pmatrix}
\end{equation*}
where $N'$ is a $(3 \times 3)$-matrix. 
Using finally the symmetric matrices 
\begin{equation*}
    S_5 = \begin{pmatrix}
        0 & -N_{14} \\ -  N_{14} & -N_{24}
    \end{pmatrix},\quad S_6 = \begin{pmatrix}
        0 & -N_{41} \\ -  N_{41} & -N_{42}
    \end{pmatrix} .
\end{equation*} 
The product 
\begin{equation*}
    \begin{pmatrix}
        I_2 & S_5 \\ 0 & I_2
    \end{pmatrix}N \begin{pmatrix}
        I_2 & 0 \\ S_6 & I_2
    \end{pmatrix}
\end{equation*} 
is then the image of the $Sp_2 $ matrix $\begin{pmatrix}
    a & b \\ c & d
\end{pmatrix}$ into $Sp_4$ where 

\begin{align*}
    a =& 16x_2^3y_1^2y_2+16x_2^2y_1^2z-16x_2^2y_1y_2z-16x_2^2y_1^2-8x_2^2y_1y_2-16x_2^2y_2^2-24x_2y_1z^2 \\&+20x_2y_1z-8x_2y_2z+8z^3+4x_2y_1-8x_1y_2+12x_2y_2-8z^2-2z+1, \\
    b =& -32x_2^3y_2z-32x_1x_2^2y_2+32x_2^3y_2-32x_2^2z^2-48x_1x_2z+64x_2^2z-16x_1^2+40x_1x_2-32x_2^2, \\
    c =& -2x_2y_1^3-4x_2y_1^2y_2+2y_1^2z+4y_1y_2z+y_1^2+2y_1y_2+4y_2^2, \\
    d =& 4x_2y_1z+8x_2y_2z-4x_2y_1+8x_1y_2-12x_2y_2-4z^2+2z+1.
\end{align*}

\subsection{Second construction: weight shifting}

\subsubsection{The universal bundle on $Q_5$}
We consider the fiber sequence
\[
Q_5\to \mathrm{BSL}_2\to \mathrm{BSL}_3 
\]
in which the left-hand map classifies the universal rank $2$ bundle $P$ on $Q_5$. Writing 
\[
R=k[x_1,x_2,x_3,y_1,y_2,y_3]/(\sum_{i=1}^3 x_iy_i-1)
\]
for the coordinate ring of $Q_5$, it is given by the exact sequence
\[
0\to P\to R^3\xrightarrow{(x_1\phantom{i}x_2\phantom{i}x_3)} R\to 0,
\]
which is split by the homomorphism $R\xrightarrow{\begin{pmatrix} y_1 \\ y_2 \\y_3\end{pmatrix}}R^3$. We can then see $P$ as the image of the idempotent matrix
\begin{equation}\label{eqn:universalidempotent}
M(P):=\begin{pmatrix} 1-x_1y_1 & -x_2y_1 & -x_3y_1 \\ -x_1y_2 & 1-x_2y_2 & -x_3y_2 \\ -x_1y_3 & -x_2y_3 & 1-x_3y_3\end{pmatrix}.
\end{equation}
\begin{rem}
It is possible to show that the map $Q_5\to \mathrm{BSL}_2$ factorizes as $Q_5\to Q_4\to \mathrm{BSL}_2$, in which $Q_5\to Q_4$ is the suspension of the exotic Hopf map constructed in the previous section and $Q_4\to \mathrm{BSL}_2$ is the map classifying the universal rank $2$ symplectic bundle on $Q_4$. Since we don't use this fact, we leave the details to the reader.
\end{rem}

The scheme $\A^3\smallsetminus\{0\}$ can be covered by the two open subschemes $\A^2\smallsetminus\{0\}\times \A^1$ and $\A^2\times \gm{}$, and we transform this covering into a covering of $Q_5$ using the affine open subscheme $U:=\{x_3\neq 0\}\subset Q_5$ and the affine closed subscheme $V:=\{y_3=0\}\subset Q_5$. We obtain a Cartesian square 
\begin{equation}\label{eqn:homotopycartesianQ5}
\xymatrix{U\cap V\ar[r]\ar[d] & U\ar[d]^-j \\
V\ar[r]_-i & Q_5}
\end{equation}
in which $U\cap V$ is the affine scheme $Q_3\times \gm$, with coordinates $(x_1,x_2,y_1,y_2,x_3)$. The projections $U\to \A^2\times \gm{}$ and $V\to \A^2\smallsetminus\{0\}\times \A^1$ induced by the projections $Q_5\to \A^3\smallsetminus\{0\}$ are weak equivalences, and it follows that the maps $i$ and $j$ are homotopy trivial. Consequently, the pull-back of the universal rank $2$ bundle on $Q_5$ along both $i$ and $j$ is trivial and we now find explicit trivializations starting with $U$. Consider the matrix
\[
E:=\begin{pmatrix} 0 & 0 & -x_3 \\ 0 & 1 & 0 \\ x_3^{-1} & -x_2x_3^{-1} & x_1\end{pmatrix}\begin{pmatrix} 1 & 0 & 0 \\ y_2 & 1 & 0 \\ -x_3^{-1}y_1 & 0 & 1\end{pmatrix}=\begin{pmatrix} y_1 & 0 & -x_3 \\ y_2 & 1 & 0 \\ y_3 & -x_2x_3^{-1} & x_1 \end{pmatrix}
\]
which is easily seen to be elementary (using Whitehead Lemma). We have 
\[
E^{-1}=\begin{pmatrix} x_1 & x_ 2 & x_3 \\ -x_1y_2 & 1-x_2y_2 & -x_3y_2 \\ -x_3^{-1}(1-x_1y_1) & x_3^{-1}x_2y_1 & y_1\end{pmatrix}
\]
and the properties $(x_1\phantom{i}x_2\phantom{i}x_3)E=(1\phantom{i}0\phantom{i}0)$, $E^{-1}\begin{pmatrix} y_1 \\ y_2 \\y_3\end{pmatrix}=\begin{pmatrix} 1 \\ 0 \\0\end{pmatrix}$ are easily verified. It follows that $E$ induces an isomorphism $k[U]^2\to P_{\vert U}$, for instance since
\[
E^{-1}M(P)E=E^{-1}(\mathrm{Id}-\begin{pmatrix} y_1 \\ y_2 \\y_3\end{pmatrix}(x_1\phantom{i}x_2\phantom{i}x_3))E=\mathrm{Id}-\begin{pmatrix} 1 \\ 0 \\0\end{pmatrix}(1\phantom{i}0\phantom{i}0)=\begin{pmatrix} 0 & 0 & 0 \\ 0 & 1 & 0 \\ 0 & 0 & 1\end{pmatrix}.
\]
We perform the same task over $V$, now using the matrix
\begin{eqnarray*}
F& := & \begin{pmatrix}1 & 0 & (1-x_3)y_1 \\ 0 & 1 & (1-x_3)y_2 \\ 0 & 0 & 1\end{pmatrix}\begin{pmatrix}1 & 0 & 0 \\ 0 & 1 & 0 \\ -x_1 & -x_2 & 1\end{pmatrix}\begin{pmatrix}0 & 0 & -1 \\ 0 & 1 & 0 \\ 1 & 0 & 0\end{pmatrix}\begin{pmatrix} 1 & 0 & 0 \\ y_2 & 1 & 0 \\ -y_1 & 0 & 1\end{pmatrix}\\
& = & \begin{pmatrix} y_1 & -x_2y_1(1-x_3) & -1+(1-x_3)x_1y_1 \\ y_2 & 1-x_2y_2(1-x_3) & (1-x_3)x_1y_2 \\ 0 & -x_2 & x_1\end{pmatrix}
\end{eqnarray*}
which is also elementary. We find
\[
F^{-1}=\begin{pmatrix} x_1 & x_ 2 & x_3 \\ -x_1y_2 & x_1y_1 & -y_2 \\ -x_2y_2 & x_2y_1 & y_1\end{pmatrix}
\]
and $(x_1\phantom{i}x_2\phantom{i}x_3)F=(1\phantom{i}0\phantom{i}0)$, $F^{-1}\begin{pmatrix} y_1 \\ y_2 \\y_3\end{pmatrix}=F^{-1}\begin{pmatrix} y_1 \\ y_2 \\0\end{pmatrix}=\begin{pmatrix} 1 \\ 0 \\0\end{pmatrix}$ in the algebra 
\[
k[V]=k[x_1,x_2,y_1,y_2,x_3]/(x_1y_1+x_2y_2-1).
\]
As above, $F$ induces an isomorphism $k[V]^2\to P_{\vert V}$ since $F^{-1}M(P)F=\begin{pmatrix} 0 & 0 & 0\\ 0 & 1 & 0 \\ 0 & 0 & 1\end{pmatrix}$.

Over the intersection $U\cap V$, $E$ and $F$ do the same job and the above equations show that $E^{-1}F$ induces a matrix in $\mathrm{SL}_2(U\cap V)$. Explicitly, we find
\[
E^{-1}F=\begin{pmatrix} 1 & 0 & 0 \\ 0 &  1-x_2y_2(1-x_3) & x_1y_2(1-x_3) \\
0 & (x_3^{-1}-1)x_2y_1 & x_3^{-1}-x_1y_1(x_3^{-1}-1)\end{pmatrix}
\]
and we set 
\[
T:=\begin{pmatrix} 1-x_2y_2(1-x_3) & x_1y_2(1-x_3) \\ (x_3^{-1}-1)x_2y_1 & x_3^{-1}-x_1y_1(x_3^{-1}-1)\end{pmatrix}\in \mathrm{SL}_2(U\cap V).
\]
A simple computation yields 
\[
T:=\begin{pmatrix} 1 & 0 \\ 0 & x_3^{-1}\end{pmatrix}\begin{pmatrix} x_1 & -y_2 \\ x_2 & y_1\end{pmatrix}\begin{pmatrix} 1 & 0 \\ 0 & x_3^{-1}\end{pmatrix}^{-1}\begin{pmatrix} x_1 & -y_2 \\ x_2 & y_1\end{pmatrix}^{-1},
\]
i.e. $T$ is a commutator.
The base point $1\in\gm{}$ yields an embedding $Q_3\to Q_3\times \gm{}$, while the base point $(1,0,1,0)\in Q_3$ induces an embedding $\gm{}\to Q_3\times \gm{}$. A direct computation shows that the restriction of $T$ along both these morphisms is trivial. Since $\mathrm{SL}_2$ is a sheaf, the morphism $T\colon Q_3\times \gm{}\to \mathrm{SL}_2$ then induces a morphism of sheaves
\[
T\colon Q_3\wedge \gm{}\to \mathrm{SL}_2
\]
that we still denote by $T$, slightly abusing notation.  
Now, the square \eqref{eqn:homotopycartesianQ5} is homotopy cocartesian, being weakly equivalent to the cocartesian square 
\[
\xymatrix{(\A^2\smallsetminus \{0\})\times \gm{}\ar[r]\ar[d] & \A^2\times \gm{}\ar[d] \\
(\A^2\smallsetminus \{0\})\times \A^1 \ar[r] & \A^3\smallsetminus\{0\},}
\] 
and the long exact sequence (of pointed sets and groups) associated to this homotopy cocartesian square induces in particular a map
\[
\xymatrix{[(\A^2\smallsetminus \{0\})\times \gm{},\mathrm{SL}_2]_{\A^1}\ar[r] &[\A^3\smallsetminus\{0\},\mathrm{BSL}_2]_{\A^1}.}
\]
The above discussion shows that the image of $T\in [Q_3\wedge \gm{},\mathrm{SL}_2]_{\A^1}$ under this map is precisely the (homotopy class of) the universal rank $2$ bundle on $Q_5$. In other words, $T$ is the adjoint of the morphism in $[Q_5,\mathrm{BSL}_2]_{\A^1}$ that gives the universal rank $2$ bundle. On the other hand, we have
\[
[\A^3\smallsetminus\{0\},\mathrm{BSL}_2]_{\A^1}=\piaone_{2,3}(\mathrm{BSL}_2)(k)=\KMW {-1}(k)
\]
and the class of the universal bundle is a generator of this group. It follows that $h\cdot T$ is trivial, or in other terms $T$ is $2_{\epsilon}$-torsion. 

\subsubsection{Weight shifting}\label{sec:weightshifting}

As seen in the above section, the morphism $T\colon Q_3\wedge \gm{}\to \mathrm{SL}_2$ is $h=2_{\epsilon}$-torsion. Writing $m\colon Q_3\wedge \gm{}\to Q_3\wedge \gm{}$ the morphism $\mathrm{Id}\wedge (\cdot 2)$, this means by \cite[Proposition 2.1.9]{Asok20b} that the composite 
\[
Q_3\wedge \gm{}\xrightarrow{m}Q_3\wedge \gm{}\xrightarrow{T} \mathrm{SL}_2.
\]
is trivial (up to homotopy). It follows that $T$ is in the kernel ${}_{2_\epsilon}\piaone_{1,3}(\mathrm{SL}_2)$ of the map 
\[
\piaone_{1,3}(\mathrm{SL}_2)=[Q_3\wedge \gm{},\mathrm{SL}_2]_{\A^1}\xrightarrow{2_{\epsilon}} [Q_3\wedge \gm{},\mathrm{SL}_2]_{\A^1}=\piaone_{1,3}(\mathrm{SL}_2)
\] 
induced by $m$. As explained in \cite[Theorem 2.1.11]{Asok20b}, the choice of $\tau=-1$ yields a homomorphism
\[
{}_{2_\epsilon}\piaone_{1,3}(\mathrm{SL}_2)\to \piaone_{2,2}(\mathrm{SL}_2)/[-1]\piaone_{2,3}(\mathrm{SL}_2)
\] 
that is called \emph{weight shifting}. Our purpose in this section is to explicitly perform this procedure that is a priori quite abstract.

For this, it is slightly inconvenient to work with $Q_3\wedge \gm{}$ (which is just a sheaf), and we instead compute the composite
\[
Q_3\times \gm{}\xrightarrow{m'}Q_3\times \gm{}\xrightarrow{T} \mathrm{SL}_2
\]
where $m'$ is a model of $m$. To obtain this map, we consider the morphism $m'\colon Q_5\to Q_5$ defined by
\[
m'(x_i)=\begin{cases} x_i & \text{ for $i=1,2$,} \\ x_3^2 & \text{ for $i=3$,}\end{cases}; \phantom{i} m'(y_i)=\begin{cases} y_i(1+x_3y_3) & \text{ if $i=1,2$.} \\ y_3^2 & \text{ if $i=3$.}\end{cases}
\]
and we observe that it respects the subschemes $U$ and $V$, i.e. it induces endomorphisms of $U$ and $V$, and consequently of $U\cap V$. 
We have 
\[
T\circ m'=\begin{pmatrix} 1-x_2y_2(1-x_3^2) & x_1y_2(1-x_3^2) \\ x_2y_1(x_3^{-2}-1) & x_3^{-2}-x_1y_1(x_3^{-2}-1)\end{pmatrix}
\]
that we may rewrite as follows using the above presentation as a commutator
\[
T\circ m'=\begin{pmatrix} x_3 & 0 \\ 0 & x_3^{-1}\end{pmatrix}\begin{pmatrix} x_1 & -y_2 \\ x_2 & y_1\end{pmatrix}\begin{pmatrix} x_3^{-1} & 0 \\ 0 & x_3\end{pmatrix}\begin{pmatrix} x_1 & -y_2 \\ x_2 & y_1\end{pmatrix}^{-1}.
\]
Since $Q_3\times\gm{}$ is affine and $\mathrm{SL}_2$ is $\A^1$-naive (e.g. \cite{Asok15a}), there should be an explicit naive $\A^1$-homotopy between $T\circ m'$ and the identity. We may use Whitehead lemma to obtain
\[
\begin{pmatrix} x_3 & 0 \\ 0 & x_3^{-1}\end{pmatrix}=\begin{pmatrix} 1 & 0 \\ x_3^{-1} & 1\end{pmatrix}\begin{pmatrix} 1 & 1-x_3  \\ 0 & 1\end{pmatrix}\begin{pmatrix} 1 & 0 \\ -1 & 1\end{pmatrix}\begin{pmatrix} 1 & 1-x_3^{-1}  \\ 0 & 1\end{pmatrix},
\]
from which we deduce the expected homotopy. More precisely, we consider the matrix
\[
A(t):=\begin{pmatrix} 1 & 0 \\ tx_3^{-1} & 1\end{pmatrix}\begin{pmatrix} 1 & t(1-x_3)  \\ 0 & 1\end{pmatrix}\begin{pmatrix} 1 & 0 \\ -t & 1\end{pmatrix}\begin{pmatrix} 1 & t(1-x_3^{-1})  \\ 0 & 1\end{pmatrix}, 
\]
and we obtain a matrix
\[
D(t):=A(t)\begin{pmatrix} x_1 & -y_2 \\ x_2 & y_1\end{pmatrix}A(t)^{-1}\begin{pmatrix} x_1 & -y_2 \\ x_2 & y_1\end{pmatrix}^{-1}
\]
which has the property that $D(0)=\mathrm{Id}$ and $D(1)=T\circ m'$. Its restriction to both $Q_3\times \{1\}\times \A^1$ and $\{1,0,1,0\}\times \gm{}\times \A^1$ is trivial, and consequently $D$ can be seen as a map 
\[
(Q_3\wedge \gm{})\times \A^1\to \mathrm{SL}_2.
\] 
Setting $x_3=-1$, we obtain a map $Q_3\to Q_3\wedge \gm{}$, which in turn yields a map $G(t)\colon Q_3\times \A^1\to \mathrm{SL}_2$, which becomes trivial (since $D(1)=T\circ m'$ and $m'(-1)=1$) when restricted to $Q_3\times S^0$, where we set $S^0:=\{t(1-t)=0\}\subset \A^1$. Thus, we obtain a map $(Q_3\times\A^1)/(Q_3\times S^0)\to \mathrm{SL}_2$ that is explicitly of the form
\[
G(t)  =  \begin{pmatrix} 1-2t^2 & 4(t-t^3) \\ -2(t-t^3) & 1-6t^2+4t^4 \end{pmatrix}\begin{pmatrix} x_1 & -y_2 \\ x_2 & y_1\end{pmatrix}\begin{pmatrix} 1-6t^2+4t^4 & 4(t^3-t) \\ 2(t-t^3) & 1-2t^2 \end{pmatrix}\begin{pmatrix} x_1 & -y_2 \\ x_2 & y_1 \end{pmatrix}^{-1}. 
\]
In more traditional terms, $G(t)\in \mathrm{SL}_2(k[Q_3][t],t(1-t))$. Now, remember that we have a morphism
\[
\alpha_3\colon Q_3\times \A^1\to Q_4
\]
given by $\alpha_3(x_1,x_2,y_1,y_2,t)=(x_1,x_2,y_1t(1-t),y_2t(1-t),t)$, mapping $ Q_3\times S^0$ to the closed subset  $\{z(1-z)=0\}\subset Q_4$. On the other hand, the above morphism induces an isomorphism 
\[
\alpha_3\colon Q_3\times (\A^1\smallsetminus \{0,1\})\simeq D(z(1-z))\subset Q_4
\]
with inverse given by $(x_1,x_2,y_1,y_2,z)\mapsto (x_1,x_2,\frac{y_1}{z(1-z)},\frac{y_2}{z(1-z)},z)$. The matrix $G(t)$ restricts to a map
\[
Q_3\times (\A^1\smallsetminus \{0,1\})\to \mathrm{SL}_2
\]
and thus induces a map $D(z(1-z))\to \mathrm{SL}_2$ that should morally be extended to $Q_4$ by setting its restriction to $\{z(1-z)=0\}$ to be trivial. However, $G(t)$ lifts to a map on $Q_4$ only if every monomial involving $\frac{y_i}{z(1-z)}$ is multiplied by $z(1-z)$, which is not the case as a simple computation shows. We can nevertheless modify $G(t)$ up to homotopy relative to $t(1-t)$ to achieve this, using the next lemma.  

\begin{lem}
Let $M:=\begin{pmatrix} x_1 & -y_2 \\ x_2 & y_1\end{pmatrix}$. Then:
\[
M\begin{pmatrix} s & p \\ q & r\end{pmatrix}M^{-1}=s\begin{pmatrix} x_1 \\ x_2 \end{pmatrix}\begin{pmatrix} y_1 & y_2\end{pmatrix}+ p\begin{pmatrix} x_1 \\ x_2 \end{pmatrix}\begin{pmatrix} -x_2 & x_1\end{pmatrix} + q\begin{pmatrix} -y_2 \\ y_1 \end{pmatrix}\begin{pmatrix} y_1 & y_2\end{pmatrix} + r\begin{pmatrix} -y_2 \\ y_1 \end{pmatrix}\begin{pmatrix} -x_2 & x_1\end{pmatrix}
\] 
for any $s,p,q,r\in k[t]$.
\end{lem}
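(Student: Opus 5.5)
The identity is linear in $(s,p,q,r)$, so the plan is to reduce it to a rank-one expansion. I will write $M$ in terms of its columns and $M^{-1}$ in terms of its rows, and then expand the product.

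First I compute $M^{-1}$. On $Q_3$ we have $\det M = x_1y_1 + x_2y_2 = 1$, so
\[
M^{-1} = \begin{pmatrix} y_1 & y_2 \\ -x_2 & x_1 \end{pmatrix}.
\]
Its rows are $\rho_1 = (y_1\ \ y_2)$ and $\rho_2 = (-x_2\ \ x_1)$. The columns of $M$ are $c_1 = \begin{pmatrix} x_1 \\ x_2\end{pmatrix}$ and $c_2 = \begin{pmatrix} -y_2 \\ y_1\end{pmatrix}$.

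Next I decompose the middle matrix in the elementary basis:
\[
\begin{pmatrix} s & p \\ q & r\end{pmatrix} = s e_{11} + p e_{12} + q e_{21} + r e_{22}.
\]
For each basis element, $M e_{ij} M^{-1} = c_i \rho_j$: this is the column $c_i$ of $M$ times the row $\rho_j$ of $M^{-1}$. Summing the four terms with their coefficients gives exactly the four rank-one terms in the statement: $s\, c_1\rho_1 + p\, c_1\rho_2 + q\, c_2\rho_1 + r\, c_2\rho_2$.

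The coefficients $s,p,q,r$ lie in $k[t]$, which is central, so they commute with all the entries. Because of this, the argument is purely formal, and I would also remark that it is valid over $k[Q_3][t]$.

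There is no real obstacle here. The only points needing care are checking the sign convention in $M^{-1}$ and using the relation $x_1y_1 + x_2y_2 = 1$, which I would verify directly by computing $MM^{-1} = \mathrm{Id}$.
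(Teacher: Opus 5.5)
Your proof is correct: $Me_{ij}M^{-1}=(Me_i)(e_j^{T}M^{-1})=c_i\rho_j$, together with $M^{-1}=\begin{pmatrix} y_1 & y_2\\ -x_2 & x_1\end{pmatrix}$ (which uses $x_1y_1+x_2y_2=1$ on $Q_3$), produces exactly the four rank-one terms of the statement. The paper gives no separate proof and treats the lemma as a direct matrix computation, which is precisely what your column–row expansion carries out.
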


Now, 
\[
G(t)=\begin{pmatrix} 1-2t^2 & 4(t-t^3) \\ -2(t-t^3) & 1-6t^2+4t^4 \end{pmatrix}\begin{pmatrix} x_1 & -y_2 \\ x_2 & y_1\end{pmatrix}\begin{pmatrix} 1-6t^2+4t^4 & 4(t^3-t) \\ 2(t-t^3) & 1-2t^2 \end{pmatrix}\begin{pmatrix} x_1 & -y_2 \\ x_2 & y_1 \end{pmatrix}^{-1}.
\]
Using this lemma, we replace $\begin{pmatrix} 1-6t^2+4t^4 & 4(t^3-t) \\ 2(t-t^3) & 1-2t^2 \end{pmatrix}$ by a matrix $B(t) = \begin{pmatrix} s & p \\ q & r\end{pmatrix}$ for some polynomials $s,p,q,r \in k[t]$ assumed to be invertible and homotopic to the latter relative to $t(1-t)$, and having the properties that the product 
\[
\begin{pmatrix} x_1 & -y_2 \\ x_2 & y_1\end{pmatrix}B(t)\begin{pmatrix} x_1 & -y_2 \\ x_2 & y_1\end{pmatrix}^{-1}
\] 
lies in the image of the $k$-algebra map $\psi \colon k[Q_4] \to k[Q_3\times \A^1]$ induced by $\alpha_3$, i.e.  
\[
x_i \mapsto x_i, y_i \mapsto t(1-t)y_i, z \mapsto t.
\]
Let then 
\[ 
B(t) = \begin{pmatrix} s & p \\ q & r\end{pmatrix} = \left (
\begin{pmatrix} 1 & \alpha(1-t)-2t \\ 0 & 1\end{pmatrix}\begin{pmatrix} 1 & 0 \\ t+\beta t(1-t) & 1\end{pmatrix}\right)^2
\] 
for some polynomials $\alpha,\beta\in k[t]$. A direct computation yields 
\[
q= (2(t+t(1-t)\beta)+(t+t(1-t)\alpha)^2(t(1-t)\alpha-2t)).
\] 
If we want the expression to be divisible by $t^2(1-t)^2$, we may for instance take $\alpha=-6$, $\beta=-1$ to obtain a matrix
\[
B(t) := \left(\begin{pmatrix}
    1 & -2t - 6t(1-t) \\ 0 & 1 
\end{pmatrix}\begin{pmatrix}
    1 & 0 \\ t - t(1-t) & 1
\end{pmatrix}\right)^2 = \left(\begin{pmatrix}
    1 & 6t^2-8t \\ 0 & 1 
\end{pmatrix}\begin{pmatrix}
    1 & 0 \\ t^2 & 1
\end{pmatrix}\right)^2. 
\]
that is apparently homotopic to $A(t)^{-1}$ relative to $t(1-t)$. We can then set 
\[
G'(t) = \begin{pmatrix} 1-2t^2 & 4(t-t^3) \\ -2(t-t^3) & 1-6t^2+4t^4 \end{pmatrix}\begin{pmatrix} x_1 & -y_2 \\ x_2 & y_1\end{pmatrix}B(t)\begin{pmatrix} x_1 & -y_2 \\ x_2 & y_1\end{pmatrix}^{-1}
\] 
and observe that this matrix indeed extends to a map $Q_4\to  \mathrm{SL}_2$ as its coordinates lie in the image of the $k$-algebra map $\psi \colon k[Q_4] \to k[Q_{3}\times \A^1]$. By computing $\psi^{-1}(G'(t))$ we get a map $Q_4 \to Q_3$ given by the unimodular row $(a,b)$, where
\begin{align*}
   a =& 144x_2^2z^9+144x_2y_1z^9+72x_1x_2z^8-384x_2^2z^8-240x_2y_1z^8-72x_2 y_2z^8-72z^{10}-192x_1x_2z^7\\ &+112x_2^2z^7-128x_2y_1z^7+120x_2y_2z^7+192 z^9+92x_1x_2z^6+384x_2^2z^6+256x_2y_1z^6+28x_2y_2z^6\\&-92z^8+96x_1x_2z ^5-208x_2^2z^5+48x_2y_1z^5-24y_1^2z^5-68x_2y_2z^5-96z^7-40x_1x_2z^4- 64x_2^2z^4\\&-16x_2y_1z^4-16y_1^2z^4-28x_2y_2z^4+12y_1y_2z^4+28z^6-32 x_1x_2z^3-48x_2^2z^3-64x_2y_1z^3+16y_1^2z^3\\&+4x_2y_2z^3+8y_1y_2z^3+48 z^5-12x_1x_2z^2+64x_2^2z^2+16y_1^2z^2+16x_2y_2z^2-2y_1y_2z^2+18z^4\\&+ 16x_1x_2z+8y_1^2z-4y_1y_2z-24z^3-2y_1y_2-2z^2+1, \\
   b = & -144x_1x_2z^9+144x_2y_2z^9-72x_1^2z^8+384x_1x_2z^8+72x_1y_2z^8-240x_2 y_2z^8+192x_1^2z^7- 112x_1x_2z^7\\&-120x_1y_2z^7-128x_2y_2z^7-92x_1^2z^6 -384x_1x_2z^6-28x_1y_2z^6+256x_2y_2z^6-96x_1^2z^5+208x_1x_2z^5\\&+68x_1 y_2z^5+48x_2y_2z^5-24y_1y_2z^5-24z^7+40x_1^2z^4+64x_1x_2z^4+28x_1y_2 z^4-16x_2y_2z^4\\&-16y_1y_2z^4+12y_2^2z^4+32z^6+32x_1^2z^3+48x_1x_2z^3- 4x_1y_2z^3-64x_2y_2z^3+16y_1y_2z^3+8y_2^2z^3\\&+24z^5+12x_1^2z^2-64x_1 x_2z^2-16x_1y_2z^2+16y_1y_2z^2-2y_2^2z^2-32z^4-16x_1^2z+8y_1y_2z\\&-4 y_2^2z-4z^3-2y_2^2+4z.
\end{align*}

\begin{rem}
We note that both the exotic Hopf maps obtained in Section \ref{sec:symplecticHopfmap} and above are defined over $\Z$. However,  they reduce to the trivial map on $\mathbb{F}_2$ as a direct check shows. In the second case, this can be explained by the fact that we had to choose a primitive $2$nd root of $1$. In the first case, we used a computation of $\mathrm{GW}_1^0(k)$ that needs $2$ to be invertible, and the generator we used is trivial in $\mathrm{GW}_1^0(\mathbb{F}_2)$.
\end{rem}

\begin{rem}
A direct computation shows that the above map is not trivial relative $z(1-z)$. This could probably be further achieved by modifying it up to homotopy, but we haven't carried out the necessary computations since we don't need this property in the sequel.
\end{rem}

\begin{rem}
It would probably be difficult to prove that the exotic Hopf maps of Section \ref{sec:symplecticHopfmap} and the one above are homotopic. They however coincide after base change to an algebraically closed field as proved below.
\end{rem}


\section{A rank $2$ bundle on $\Proj^3_k$}

We finally put everything together to construct the rank $2$ bundle on $\mathbb{J}^3$. Indeed, we consider the following composite
\[
\mathbb{J}^3\xrightarrow{\pi_3} Q_6\xrightarrow{\Sigma_{\pone}\eta'}Q_5\to \mathrm{BSL}_2
\]
where $\pi_3$ is the morphism described in Section \ref{sec:collapsemap}, and $\Sigma_{\pone}\eta'$ is the $\pone$-suspension of either the exotic Hopf map of Section \ref{sec:weightshifting} or Section \ref{sec:symplecticHopfmap}. Explicitly, it is given by pulling back the idempotent matrix \eqref{eqn:universalidempotent} 
\[
M(P)=\begin{pmatrix} 1-x_1y_1 & -x_2y_1 & -x_3y_1 \\ -x_1y_2 & 1-x_2y_2 & -x_3y_2 \\ -x_1y_3 & -x_2y_3 & 1-x_3y_3\end{pmatrix}
\]
on $Q_5$ along $\mathbb{J}^3\xrightarrow{\pi_3} Q_6\xrightarrow{\Sigma_{\pone}\eta'}Q_5$.

\subsection{Invariants of the bundle}
We may compute the invariants associated to the above rank $2$ bundle by looking at its class in the set $[\mathbb{J}^3, \mathrm{BSL}_2]_{\A^1}=[\mathbb{P}^3, \mathrm{BSL}_2]_{\A^1}$.
Recall from \cite[\S 6]{Asok12a} that if $X$ is a smooth $k$-scheme of dimension $\leq 3$, there is an exact sequence of pointed sets
\[
\mathrm{H}^1(X,\piaone_2(\mathrm{BSL}_2))\to \mathrm{H}^3(X,\piaone_3(\mathrm{BSL}_2))\to [X,\mathrm{BSL}_2]_{\A^1}\to \mathrm{H}^2(X,\piaone_2(\mathrm{BSL}_2))\to *
\]
deduced from the Postnikov tower of $\mathrm{BSL}_2$ (using the fact that $\mathrm{BSL}_2$ is $\A^1$-simply connected). We know that $\piaone_2(\mathrm{BSL}_2)=\KMW{2}$, while the sheaf $\piaone_3(\mathrm{BSL}_2)$ fits into the exact sequence \eqref{eqn:pi2a2minus0}
\[
\KMW4/12_\epsilon\to \piaone_3(\mathrm{BSL}_2)\to \mathbf{GW}_3^2\to 0.
\]
We suppose that the base field $k$ is algebraically closed, in which case we obtain from \cite[Theorem 11.7]{Fasel09d}
\[
\mathrm{H}^3(\mathbb{P}^3,\KMW4/12_\epsilon)=\mathrm{H}^3(\mathbb{P}^3,\mathbf{K}^{\mathrm{M}}_4/12)=k^\times/(k^\times)^{12}=0,
\] 
and the above exact sequence then reads 
\[
\mathrm{H}^1(\mathbb{P}^3,\KMW2)\to \mathrm{H}^3(\mathbb{P}^3,\mathbf{GW}_3^2)\to [\mathbb{P}^3,\mathrm{BSL}_2]_{\A^1}\to \mathrm{H}^2(\mathbb{P}^3,\KMW2)\to *
\]
The inclusion $i\colon\mathbb{P}^3_k\smallsetminus \{0\}\subset \mathbb{P}^3_k$ (for which an affine model is given by the inclusion $\mathbb{J}^3\smallsetminus\{\A^3\}\to \mathbb{J}^3$) yields a commutative diagram
\[
\xymatrix@C=1em{\mathrm{H}^1(\mathbb{P}^3_k,\KMW2)\ar[r]\ar[d]^-{i^*} & \mathrm{H}^3(\mathbb{P}^3_k,\mathbf{GW}_3^2)\ar[r]\ar[d]^-{i^*} & [\mathbb{P}^3_k,\mathrm{BSL}_2]_{\A^1}\ar[r]\ar[d]^-{i^*} & \mathrm{H}^2(\mathbb{P}^3_k,\KMW2)\ar[r]\ar[d]^-{i^*} & \star \\
\mathrm{H}^1(\mathbb{P}^3_k\smallsetminus \{0\},\KMW2)\ar[r] & \mathrm{H}^3(\mathbb{P}^3_k\smallsetminus \{0\},\mathbf{GW}_3^2)\ar[r] & [\mathbb{P}^3_k\smallsetminus \{0\},\mathrm{BSL}_2]_{\A^1}\ar[r] & \mathrm{H}^2(\mathbb{P}^3_k\smallsetminus \{0\},\KMW2)\ar[r] & \star }
\]
The projection $\mathbb{P}^3\smallsetminus\{0\}\to \mathbb{P}^2$ defined by $[x_0:x_1:x_2:x_3]\mapsto [x_0:x_1:x_2]$ is a weak equivalence and consequently $\mathbb{P}^3\smallsetminus\{0\}$ is of $\A^1$-cohomological dimension $\leq 2$. The bottom sequence then reduces to a bijection
\[
[\mathbb{P}^3_k\smallsetminus \{0\},\mathrm{BSL}_2]_{\A^1}\to \mathrm{H}^2(\mathbb{P}^3_k\smallsetminus \{0\},\KMW2)
\] 
given by the Euler class. It follows from \cite[Theorem 11.7]{Fasel09d} that the projection map
\[
\mathrm{H}^2(\mathbb{P}^3,\KMW2)\to \mathrm{H}^2(\mathbb{P}^3,\mathbf{K}^{\mathrm{M}}_2)=\mathrm{CH^2}(\mathbb{P}^3)=\Z
\]
is an isomorphism. The same applies for $\mathbb{P}^2_k$ and thus the above diagram is of the form
\[
\xymatrix@C=1.5em{\mathrm{H}^1(\mathbb{P}^3_k,\KMW2)\ar[r] & \mathrm{H}^3(\mathbb{P}^3_k,\mathbf{GW}_3^2)\ar[r]\ar[d] & [\mathbb{P}^3_k,\mathrm{BSL}_2]_{\A^1}\ar[r]^-{c_2}\ar[d]^-{i^*} & \Z\ar[r]\ar@{=}[d] & \star \\
 & \star\ar[r] & [\mathbb{P}^2_k,\mathrm{BSL}_2]_{\A^1}\ar[r]_-{c_2} & \Z\ar[r] & \star }
\]
By construction, the bundle constructed using the exotic Hopf map is classified by the composite 
\[
\mathbb{J}^3\to Q_6\xrightarrow{\Sigma_{\pone}\eta'}Q_5\to \mathrm{BSL}_2
\]
The sequence $\mathbb{J}^3\smallsetminus \A^3\xrightarrow{i} \mathbb{J}^3\to Q_6$ being isomorphic to the cofiber sequence 
\[
\mathbb{P}^3_k\smallsetminus\{0\}\to \mathbb{P}^3_k\to (\pone)^{\wedge 3}
\]
we obtain that the composite $\mathbb{J}^3\smallsetminus \A^3\xrightarrow{i} \mathbb{J}^3\to Q_6$ is trivial up to homotopy. The diagram then shows in particular that the second Chern class of this bundle vanishes. We can now use the morphism $\mathbb{P}^3_k\to (\pone)^{\wedge 3}$, modeled by the morphism $\mathbb{J}^3\to Q_6$, to obtain a commutative diagram with exact columns
\[
\xymatrix@C=1.5em{\mathrm{H}^1((\pone)^{\wedge 3},\KMW2)\ar[r] & \mathrm{H}^3((\pone)^{\wedge 3},\mathbf{GW}_3^2)\ar[r]\ar[d] & [(\pone)^{\wedge 3},\mathrm{BSL}_2]_{\A^1}\ar[r]\ar[d]^-{i^*} & \mathrm{H}^2((\pone)^{\wedge 3},\KMW2)\ar[r]\ar@{=}[d] & \star \\
\mathrm{H}^1(\mathbb{P}^3_k,\KMW2)\ar[r] & \mathrm{H}^3(\mathbb{P}^3_k,\mathbf{GW}_3^2)\ar[r]\ar[d] & [\mathbb{P}^3_k,\mathrm{BSL}_2]_{\A^1}\ar[r]^-{c_2}\ar[d]^-{i^*} & \Z\ar[r]\ar@{=}[d] & \star \\
 & \star\ar[r] & [\mathbb{P}^2_k,\mathrm{BSL}_2]_{\A^1}\ar[r]_-{c_2} & \Z\ar[r] & \star }
\]
In view of \cite[Proposition 1.1.5]{Asok16}, the only nontrivial group in the top row is 
\[
\mathrm{H}^3((\pone)^{\wedge 3},\mathbf{GW}_3^2)=\mathbf{GW}_0^3(k)=\Z/2\Z,
\] and we thus obtain 
\[
\xymatrix@C=1.5em{\star\ar[r] & \mathbf{GW}_0^3(k)\ar[r]\ar[d] & [(\pone)^{\wedge 3},\mathrm{BSL}_2]_{\A^1}\ar[r]\ar[d]^-{i^*} & \star\ar[r]\ar@{=}[d] & \star \\
\mathrm{H}^1(\mathbb{P}^3_k,\KMW2)\ar[r] & \mathrm{H}^3(\mathbb{P}^3_k,\mathbf{GW}_3^2)\ar[r]\ar[d] & [\mathbb{P}^3_k,\mathrm{BSL}_2]_{\A^1}\ar[r]^-{c_2}\ar[d]^-{i^*} & \Z\ar[r]\ar@{=}[d] & \star \\
 & \star\ar[r] & [\mathbb{P}^2_k,\mathrm{BSL}_2]_{\A^1}\ar[r]_-{c_2} & \Z\ar[r] & \star }
\]
We claim that he composite
\[
\Z/2=\mathbf{GW}_0^3(k)=\mathrm{H}^3((\pone)^{\wedge 3},\mathbf{GW}_3^2)\to \mathrm{H}^3(\mathbb{P}^3_k,\mathbf{GW}_3^2)\to [\mathbb{P}^3_k,\mathrm{BSL}_2]_{\A^1}
\]
is split injective.

Indeed, we consider the map $[\mathbb{P}^3_k,\mathrm{BSL}_2]_{\A^1}\to [\mathbb{P}^3_k,\mathrm{BSp}]_{\A^1}=\widetilde{\mathrm{GW}}^2(\mathbb{P}_k^3)$ induced by the stabilization, where the right-hand side is the reduced Grothendieck-Witt group of symplectic bundles. The push-forward map induced by the projection $\mathbb{P}^3_k\to \Spec(k)$ reads as
\[
\widetilde{\mathrm{GW}}^2(\mathbb{P}_k^3)\to \mathrm{GW}^3(\Spec(k))=\Z/2
\]
and is explicitly described in \cite[\S 4.2.2]{Asok25a}, where it is called the motivic Atiyah-Rees invariant. The computation therein shows that this homomorphism provides a splitting of $\Z/2\to [\mathbb{P}^3_k,\mathrm{BSL}_2]_{\A^1}$, and thus that the Atiyah-Rees invariant of the exotic Hopf bundle is $1\in \Z/2$.


\begin{footnotesize}
\bibliographystyle{alpha}
\bibliography{General}
\end{footnotesize}
\Addresses
\end{document}